\newtheorem{theorem}{Theorem}
\newtheorem{lemma}{Lemma}
\newtheorem{definition}{Definition}
\newtheorem{proposition}{Proposition}
\newcommand{\R}{\mathbb{R}}
\newcommand{\Rn}{\R^n}
\newcommand{\nn}{{n \times n}}
\newcommand{\Snn}{\mathbb{S}^\nn}
\newcommand{\defn}{\mathrel{\mathop :}=}
\newcommand{\st}{\text{s.t.}}
\def\0{{\bf 0}}
\def\1{{\bf 1}}
\def \bmat{\left[\begin{matrix}}
	\def \emat{\end{matrix}\right]}
\def \bvec{\left(\begin{matrix}}
	\def \evec{\end{matrix}\right)}
\def \QED{\begin{flushright}\Halmos\end{flushright}\end{proof}}
\def \defeq{\mathrel{\mathop{:}}=}
\def \xbar{\bar{x}}
\def \grad{\nabla}
\def \gp3d{\grad p_3(d)}
\def \Hess{\nabla^2}
\def \beq{\begin{equation}}
\def \eeq{\end{equation}}
\def \baeq{\begin{equation*}\begin{aligned}}
\def \eaeq{\end{aligned}\end{equation*}}
\newcommand{\baeql}[1]{\begin{equation}\label{#1}\begin{aligned}}
\def \eaeql{\end{aligned}\end{equation}}
\newcommand{\jeff}[1]{{#1}}
\title{Higher-Order Newton Methods\\ with Polynomial Work per Iteration}
\author{Amir Ali Ahmadi\thanks{Princeton University, Operations Research and Financial Engineering. AAA and AC were partially supported by the MURI award of the AFOSR and the Sloan Fellowship.}, Abraar Chaudhry\footnotemark[1], Jeffrey Zhang\thanks{Yale University, \jeff{Department of} Biomedical Informatics and Data Science.}}
\date{}
\begin{document}

\maketitle

\begin{abstract}

We present generalizations of Newton's method that incorporate derivatives of an arbitrary order $d$ but maintain a polynomial dependence on dimension in their cost per iteration. At each step, our $d$\textsuperscript{th}-order method uses semidefinite programming to construct and minimize a sum of squares-convex approximation to the $d$\textsuperscript{th}-order Taylor expansion of the function we wish to minimize. We prove that our $d$\textsuperscript{th}-order method has local convergence of order $d$. This results in lower oracle complexity compared to the classical Newton method. We show on numerical examples that basins of attraction around local minima can get larger as $d$ increases. Under additional assumptions, we present a modified algorithm, again with polynomial cost per iteration, which is globally convergent and has local convergence of order $d$.




\end{abstract}

{\bf Keywords.} Newton's method, tensor methods, semidefinite programming, sum of squares methods, convergence analysis.

\section{Introduction}\label{Sec: Introduction}

Newton's method is perhaps one of the most well-known and prominent algorithms in optimization. In its attempt to minimize a function $f:\Rn\rightarrow\R$, this algorithm 
replaces $f$ with its second-order Taylor expansion at an iterate $x_k\in\Rn$ and defines the next iterate $x_{k+1}$ to be a critical point of this quadratic approximation. This critical point coincides with a minimizer of the quadratic approximation in the case where the Hessian of $f$ at $x_k$ is positive semidefinite. 

The work required in each iteration of Newton's method consists of solving a system of linear equations which arises from setting the gradient of the quadratic approximation to zero. This can be carried out in time that grows polynomially with the dimension $n$. Perhaps the most well-known theorem about the performance of Newton's method is its \emph{local quadratic convergence}. More precisely, under the assumptions that the second derivative of $f$ is locally Lipschitz around a local minimizer $x^*$, and that the Hessian at $x^*$ is positive definite, there exists a full-dimensional basin around $x^*$ and a constant $c$,
such that if $x_0$ is in this basin, one has $$\|x_{k+1} - x^*\| \le c\|x_k - x^*\|^2$$ for all $k\geq 0$.
We note however that Newton's method is in general not globally convergent. Lack of global convergence can occur even when in addition to the previous assumptions, $f$ is assumed to be strongly convex (see, e.g., Example~\ref{Ex: 363 function} in Section~\ref{Sec: Numerical examples}).

As higher-order Taylor expansions provide closer local approximations to the function $f$, it is natural to ask why Newton's method limits the order of Taylor approximation to 2. The main barrier to higher-order Newton methods is the computational burden associated with minimizing polynomials of degree larger than 2 which would arise from higher-order Taylor expansions. For instance, any of the following tasks that one could consider for each iteration of a higher-order Newton method are in general NP-hard:


\begin{enumerate}[(i)]
\item finding a global minimum of polynomials of degree even\footnote{Note that odd-degree polynomials are unbounded below.} and at least 4 (see, e.g.,~\cite{murty1987some}), 
    \item finding a local minimum of polynomials of degree at least 4 (see~\cite[Theorem 2.1]{ahmadi2022QPs}), 
    \item finding a second-order point (i.e., a point where the gradient vanishes and the Hessian is positive semidefinite) of polynomials of degree at least 4 (see~\cite[Theorem 2.2]{ahmadi2022complexity}),
    \item finding a critical point (i.e., a point where the gradient vanishes) of polynomials of degree at least 3 (see~\cite[Theorem 2.1]{ahmadi2022complexity}).  
\end{enumerate}


In addition to matters related to computation, there are geometric distinctions between Newton's method and higher-order analogues of it. For example, even when the function $f$ is strongly convex and the starting iterate is arbitrarily close to its minimizer, Taylor expansions of \jeff{even} degree and larger than 2 may not be bounded below. One can see this by examining the strongly convex univariate function $f(x) = x^2 - x^4 + x^6$ and its 4\textsuperscript{th} order Taylor expansion near the origin.


Despite these barriers, the question of whether one can make higher-order Newton methods tractable and in some way superior to Newton's method has been considered at least since the work of Chebyshev \cite{chebyshev} (see Section~\ref{SSec: Related Work} for more recent literature). More specifically, the question that is of interest to us is whether it is possible to design a higher-order Newton method (i.e., a method which utilizes a Taylor expansion of degree $d>2$ in each iteration) in such a way that (i) the work per iteration grows polynomially with the dimension, and (ii) the local order of convergence grows with $d$, hence requiring fewer function evaluations as $d$ increases. In this paper, we show that this is indeed possible (\Cref{alg:main} and \Cref{thm:rate}).



Our algorithm relies on sum of squares techniques in optimization~\cite{pablothesis},~\cite{Lasserre2000} and semidefinite programming and does not require the function $f$ to be convex.
For any fixed degree $d$, our approach is to approximate the $d$-th order Taylor expansion of $f$ with an ``sos-convex'' polynomial (see Section~\ref{Sec: Preliminaries} for a definition). 
Sos-convex polynomials form a subclass of convex polynomials whose convexity has an explicit algebraic proof.  
One can then use a first-order sum of squares relaxation to minimize this sos-convex polynomial. It turns out that both the task of finding a suitable sos-convex polynomial and that of minimizing it can be carried out by solving two semidefinite programs whose sizes are polynomial in the dimension $n$ (in fact of the same order as the number of terms in the Taylor expansion). As is well known, semidefinite programs can be solved to arbitrary accuracy in polynomial time; see~\cite{vandenberghe1996semidefinite} and references therein.  

We work with sos-convex polynomials instead of general convex polynomials since the latter set lacks a tractable description \cite{ahmadi2013np}, and the former, as we show, turns out to be sufficient for achieving an algorithm with superlinear local convergence. Our sum of squares based algorithm works for higher-order Newton methods of any order $d$ and can be easily implemented using any sum of squares parser (e.g., YALMIP~\cite{lofberg2004yalmip} or SOSTOOLS~\cite{prajna2002introducing}). This is in contrast to previous work where implementable algorithms have been worked out only for $d=3$~; see \cite{nesterov2021implementable},~\cite[Sect. 1.5]{doikovthesis},~\cite[Sect. 5]{grapiglia2022tensor}. While we present our algorithms in the unconstrained case, they can be readily implemented in the presence of sos-convex constraints (such as linear constraints or convex quadratic constraints). We note, however, that our interest in this paper is only on generalizing Newton's method in terms of its convergence order and polynomial work per iteration, and not on the practical aspects of implementation.
Designing more scalable algorithms for semidefinite programs is an active area of research~\cite{majumdar2020recent,yurtsever2021scalable}. In addition, we believe that there are promising future research directions which could make our algorithms more practical at larger scale (see \Cref{sec:future}).

\subsection{Related Work}\label{SSec: Related Work}

Over the years, there have been many adaptations of and extensions to Newton's method. A primary example is the pioneering work of Nesterov and Polyak \cite{nesterov2006cubic}, where the idea of Newton's method with cubic regularization was introduced. We do not review the large literature that emerged from this work since the order $d$ of Taylor expansion in this line of work is still equal to 2, and hence these methods are not considered ``higher-order'' (i.e., $d>2$). However, the framework that we propose, similar to most of the literature, follows the structure of~\cite{nesterov2006cubic} (and~\cite{levenberg1944method, marquardt1963algorithm}) in terms of minimizing, in each iteration, a Taylor expansion of a certain order plus an appropriate regularization term. Recently, there has been a body of work following this structure with Taylor expansions of order higher than two~\cite{nesterov2021implementable, baes2009estimate, bubeck2019near, jiang2020unified, jiang2021optimal, grapiglia2022tensor}. Unlike our paper, these works are in the setting of convex optimization, do not study the complexity of minimizing the regularized Taylor expansion in each iteration (except in the case of $d=3$ for a subset of these papers), and derive sublinear rates of global convergence. There has also been work on lower bounds on the rates of convergence for such methods~\cite{Birgin2017, hazan_lowerbounds, carmon2020lower,nesterov2021implementable}. These lower bounds are nearly achieved by the algorithms in the aforementioned papers. \jeff{The recent textbook~\cite{cartis2022evaluation} provides an accessible summary of this literature and its broader scope. See also~\cite{cartis2019universal, cartis2020sharp, cartis2020concise} and references therein.}


In terms of work per iteration of higher-order Newton methods, Nesterov presents a polynomial-time algorithm in~\cite{nesterov2021implementable} for minimizing a quartically-regularized third-order Taylor expansion. This problem is revisited recently in~\cite{cartis2023second}, where an algorithm for recovering an approximate second-order point for a possibly nonconvex quartically-regularized third-order Taylor expansion is presented. In~\cite{silina2022unregularized}, a different third-order Newton method is presented which has polynomial work per iteration. In each iteration, this algorithm moves to a local minimum of the third-order Taylor expansion. It turns out that local minima of cubic polynomials can be found by semidefinite programs of polynomial size~\cite{ahmadi2022complexity}. To the best of our knowledge, no efficient algorithm for higher-order Newton methods of degree $d>3$ has been presented. In fact, designing such an algorithm is referred to as an open problem in~\cite[Sec. 1.5]{doikovthesis} and \cite[Sec. 5]{grapiglia2022tensor}. Interestingly, Nesterov asks in~\cite[Sec. 6]{nesterov2021implementable} whether it is possible to tackle this problem using ``some tools from algebraic geometry and the related technique of sums of squares''. This is precisely the approach that we take in this paper.

To our knowledge, the only works that establish superlinear rates of local convergence for higher-order Newton methods are~\cite{silina2022unregularized} and~\cite{Doikov2022} (and the related PhD thesis \cite{doikovthesis}), the latter of which came to our attention at the time of writing this paper. In~\cite{silina2022unregularized}, the authors establish third-order local convergence rate for an unregularized third-order Newton method applied to a strongly convex function. In~\cite{Doikov2022}, the authors establish superlinear local convergence for higher-order Newton methods applied to convex optimization problems with composite objective. When the smooth part of the objective function is strongly convex, the authors show local convergence of order $d$ in function value and norm of the subgradient for their proposed $d$\textsuperscript{th}-order Newton method. An algorithm carrying out the work per iteration of this method, however, is available only in the case of $d=3$ (and is the same as that in~\cite{nesterov2021implementable}). Moreover, similar to much of the literature, the regularization term that is added to the Taylor expansion in this method requires knowledge of the Lipschitz constant of the $d$\textsuperscript{th} derivative of $f$. Our proof technique for local superlinear convergence is different than~\cite{Doikov2022} both in the parts where the sum of squares programming aspects come in and in the parts that they do not. Furthermore, our method has polynomial work per iteration for \emph{any} degree $d$. It also does not rely on knowledge of any Lipschitz constants. Our regularization term is instead derived from the optimal value of a semidefinite program which can be written down from the coefficients of the Taylor expansion alone. This optimized approach can potentially lead to smaller deviations from the Taylor expansion and therefore an improved convergence factor. Finally, we note that in our work, assumptions on convexity of $f$ and knowledge of the Lipschitz constant of its $d$\textsuperscript{th} derivative are made only in Section~\ref{Sec: Global convergence}, where global convergence is established. Our approach in \Cref{Sec: Global convergence} is based on incorporating sum of squares methods into the framework of Nesterov in~\cite{nesterov2021implementable}, though in theory this can also be done with other globally convergent higher-order Newton methods. \jeff{In fact, at the time of revision, there has already been interesting follow-up work to our paper which combines our sum of squares framework with adaptive regularization techniques for tensor methods and analyzes the complexity of the resulting algorithm for finding an approximate stationary point of a nonconvex function~\cite{zhu2024global}.}

\subsection{Organization and Contributions}\label{SSec: Contributions and Organization}

In Section~\ref{Sec: Preliminaries}, we review preliminaries on sos-convexity, sos-convex polynomial optimization, and error rates of derivatives of Taylor expansions. In Section~\ref{Sec: Algorithm}, we present our main algorithm (\Cref{alg:main}). In \Cref{Sec: Main Proof}, we prove that our algorithm is well-defined in the sense that the semidefinite programs it executes are always feasible and that the next iterate is always uniquely defined (\Cref{thm:well defined}). We then prove that our semidefinite programming-based $d$\textsuperscript{th}-order Newton scheme has local convergence of order $d$ (\Cref{thm:rate}).
Compared to the classical Newton method, this leads to fewer calls to the Taylor expansion oracle (a common oracle in this literature; see e.g.,~\cite{bubeck2019near},~\cite[Sect. 2.2]{jiang2021optimal},~\cite[Sect. 1.1]{hazan_lowerbounds},~\cite[Sect. 2]{Birgin2017}, \jeff{\cite[Chap. 1.2]{cartis2022evaluation}}) at the price of requiring higher-order derivatives. The proof of Theorem~\ref{thm:rate} is more involved than the proof of local quadratic convergence of Newton's method. This is in part because the expression for the next iterate of Newton's method is explicit, whereas our next iterate comes from the solution to two semidefinite programs. We also remark that our proof framework is applicable to a broader class of higher-order Newton methods that may not necessarily use sum of squares techniques.

In \Cref{Sec: Numerical examples}, we present three numerical examples. We give an explicit expression and a geometric interpretation of our third-order Newton method in dimension one. We compare the basins of attraction of local minima for our higher-order methods to those of the classical Newton method. In~\Cref{Sec: Global convergence}, we present a slightly modified higher-order Newton method which is globally convergent under additional convexity and Lipschitzness assumptions similar to those in \cite{nesterov2021implementable}. This modified algorithm works in the case of $d$ being an odd integer and still has polynomial work per iteration and local convergence of order $d$. Finally, in~\Cref{sec:future}, we present a few directions for future research.

\section{Preliminaries}\label{Sec: Preliminaries}

\subsection{SOS-Convex Polynomial Optimization}\label{SSec: SOS-Convex Polynomial Optimization}
In each iteration of the higher-order Newton methods that we propose, two semidefinite programs (SDPs) need to be solved. These SDPs arise from the notion of sos-convexity, which is reviewed in this subsection.
\begin{definition}
A polynomial $p : \Rn \mapsto \R$ is said to be a \emph{sum of squares} (sos) if there exist polynomials $q_1,\dots,q_r : \Rn \mapsto \R$ such that $p = \sum_{i=1}^r q_i^2$.
\end{definition}

As is well known, one can check if a polynomial is sos by solving an SDP. The next theorem establishes this link. We denote that a symmetric matrix $A$ is positive semidefinite (i.e., has nonnegative eigenvalues) with the standard notation $A\succeq 0$.


\begin{theorem} [see, e.g., \citep{pablothesis}]\label{thm: sos sdp}
For a variable $x\in \Rn$ and an even integer $d$, let $\phi_{\frac{d}{2}}(x)$ denote the vector of all monomials of degree at most $\frac{d}{2}$ in $x$. A polynomial $p : \Rn \mapsto \R$ of degree $d$ is sos if and only if there exists a symmetric matrix $Q$ such that (i) $p(x) = \phi_{\frac{d}{2}}(x)^TQ\phi_{\frac{d}{2}}(x)$ for all $x \in \Rn$, and, (ii) $Q \succeq 0$.
\end{theorem}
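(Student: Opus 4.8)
The plan is to establish the two directions of the biconditional separately, using a factorization of a positive semidefinite matrix for the ``if'' direction and a Gram-matrix construction for the ``only if'' direction.

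For the ``if'' direction, I would assume a symmetric $Q\succeq 0$ satisfying (i) exists. Since $Q$ is symmetric and positive semidefinite, it admits a factorization $Q = V^TV$ for some real matrix $V$ (for instance, from the spectral decomposition $Q = \sum_i \lambda_i u_i u_i^T$ with $\lambda_i \ge 0$, one takes $V$ to have rows $\sqrt{\lambda_i}\,u_i^T$). Substituting into (i) gives
\[
p(x) = \phi_{\frac{d}{2}}(x)^T V^T V \phi_{\frac{d}{2}}(x) = \| V\phi_{\frac{d}{2}}(x)\|^2 = \sum_{j} q_j(x)^2,
\]
where each $q_j(x) \defn (V\phi_{\frac{d}{2}}(x))_j$ is a linear combination of the entries of $\phi_{\frac{d}{2}}(x)$ and hence a polynomial. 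Therefore $p$ is sos.

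For the ``only if'' direction, suppose $p = \sum_{i=1}^r q_i^2$. The key preliminary step is to show that each $q_i$ has degree at most $\frac{d}{2}$, so that every $q_i$ lies in the span of the entries of $\phi_{\frac{d}{2}}(x)$. I would argue this by contradiction: let $e \defn \max_i \deg q_i$ and let $\hat q_i$ denote the degree-$e$ homogeneous part of $q_i$ (taken to be zero when $\deg q_i < e$). The degree-$2e$ homogeneous part of $p = \sum_i q_i^2$ is exactly $\sum_i \hat q_i^2$. Since not all $\hat q_i$ vanish, and a finite sum of squares of real polynomials is identically zero only when each summand is, this top part is a nonzero polynomial, forcing $\deg p = 2e$; if $2e > d$ this contradicts $\deg p = d$, so $e \le \frac{d}{2}$. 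Given this bound, write $q_i(x) = c_i^T\phi_{\frac{d}{2}}(x)$ for coefficient vectors $c_i$, whence
\[
p(x) = \sum_{i=1}^r \bigl(c_i^T\phi_{\frac{d}{2}}(x)\bigr)^2 = \phi_{\frac{d}{2}}(x)^T\Bigl(\sum_{i=1}^r c_i c_i^T\Bigr)\phi_{\frac{d}{2}}(x).
\]
Setting $Q \defn \sum_{i=1}^r c_i c_i^T$ yields a symmetric matrix satisfying (i), which is positive semidefinite as a sum of rank-one psd matrices, establishing (ii).

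The main subtlety lies in the degree-truncation argument and in the fact that (i) is a \emph{polynomial identity} in $x$ rather than a statement about coefficient vectors. Because the entries of $\phi_{\frac{d}{2}}(x)$ satisfy multiplicative relations (e.g.\ $x_1^2\cdot x_2^2 = (x_1 x_2)^2$), the matrix $Q$ realizing (i) is not unique; the theorem only asserts existence of \emph{some} such $Q$, which is precisely what the construction delivers. I would also note, though it is not strictly needed for the statement, that matching the identity (i) is equivalent to matching the coefficient of each monomial of $p$, i.e.\ it imposes affine constraints on the entries of $Q$; together with $Q\succeq 0$, this is what renders the search for $Q$ a semidefinite feasibility problem.
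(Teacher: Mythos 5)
Your proof is correct and complete; it is the standard Gram-matrix argument (factor $Q=V^TV$ for one direction, collect coefficient vectors into $\sum_i c_ic_i^T$ for the other, with the degree-truncation step justified by looking at the top homogeneous part of $\sum_i q_i^2$). The paper does not prove this theorem itself --- it cites it from the literature --- and your argument is essentially the one given in the cited reference, so there is nothing further to reconcile.
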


The first constraint above can be written as a finite number of linear equations by coefficient matching. Therefore, the two constraints together represent the intersection of an affine subspace with the cone of positive semidefinite matrices. \jeff{Thus, as polynomials can be encoded as an ordered vector of coefficients, the set of sos polynomials of a given degree has a description as the feasible region of a semidefinite program.} Furthermore, the size of this SDP grows polynomially in $n$ when $d$ is fixed.



Throughout this paper, we denote the gradient vector (resp. Hessian matrix) of a function $g: \Rn \mapsto \R$ with the standard notation $\nabla g$ (resp. $\nabla^2 g$).

\begin{definition}[SOS-Convex]\label{Defn: sos convex}
A polynomial $p : \Rn \mapsto \R$ is said to be sos-convex if the polynomial $q: \Rn \times \Rn \mapsto \R$ defined as $q(x,y) \defeq y^T \nabla^2 p(x) y$ is sos.
\end{definition}

Note that any sos-convex polynomial is convex. The converse statement is not true, except for certain dimensions and degrees (see \citep{sosconvexity}). By \Cref{thm: sos sdp} above, the set of sos-convex polynomials of a given degree also form the feasible region of a semidefinite program. Because the polynomial $q(x,y)$ is quadratic in $y$, one can reduce the size of the underlying SDP. More specifically, a polynomial $p : \Rn \mapsto \R$ of degree $d$ is sos-convex\footnote{Note that an odd-degree polynomial can never be convex, except for the trivial case of affine polynomials.} if and only if 
there exists a symmetric matrix $Q\succeq 0$ such that $y^T \nabla^2 p(x) y=  (\phi_{\frac{d}{2}-1}(x) \otimes y)^T Q (\phi_{\frac{d}{2}-1}(x) \otimes y)$. (Here, $\otimes$ denotes the Kronecker product.) We see that the size of the SDP that represents sos-convex polynomials of degree $d$ in $n$ variables grows polynomially in $n$ when $d$ is fixed.



We next explain why sos-convex polynomial optimization problems can be solved with the first level of the so-called Lasserre hierarchy.
A polynomial optimization problem is a problem of the form
\begin{equation}\label{eq: pop}
\begin{aligned}
\inf_{x \in \Rn} \quad & g_0(x) \\
\st \quad & g_j(x) \leq 0 \quad j=1,\ldots,m,
\end{aligned}
\end{equation}
where $g_j(x)$ are real-valued polynomial functions of a variable $x \in \Rn$.
The first-level Lasserre relaxation (see \cite{Lasserre2000}) corresponding to problem \eqref{eq: pop} takes the form
\begin{equation}\label{eq: sos relaxation}
\begin{aligned}
\sup_{\gamma \in \R, \lambda \in \R^{m}} \quad & \gamma \\
\st \quad & g_0(x) - \gamma + \sum_{j=1}^{m} \lambda_j g_j(x) \text{ is sos} \\
& \lambda_j \geq 0\quad j=1,\ldots,m.
\end{aligned}
\end{equation}

\jeff{The reader can check that the optimal value of \eqref{eq: sos relaxation} is always a lower bound on that of \eqref{eq: pop}. The next theorem establishes that this lower-bound is tight when the defining polynomials of \eqref{eq: pop} are sos-convex.}


\begin{theorem}[See Corollary 2.5 from \citep{Lasserre2008}, and Theorem 3.3 from \citep{lasserre2009}]\label{thm: lasserre exact}
Suppose that the polynomials $g_0,\dots,g_m$ in \eqref{eq: pop} are sos-convex, the optimal value of \eqref{eq: pop} is finite, and that the Slater condition holds\footnote{That is, there exists some $\xbar \in \Rn$ such that $g_j(\xbar) < 0$ for all $j=1,\ldots,m$.}.
Then, the optimal values of \eqref{eq: pop} and \eqref{eq: sos relaxation} are the same.
Moreover, an optimal solution to \eqref{eq: pop} can be readily recovered from a solution to the semidefinite program that is dual to \eqref{eq: sos relaxation}.
\end{theorem}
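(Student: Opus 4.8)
The plan is to split the claim into the easy direction (weak duality) and the substantive direction (absence of a duality gap), and then address the recovery statement separately. First I would observe that \eqref{eq: sos relaxation} always lower-bounds \eqref{eq: pop}: if $(\gamma,\lambda)$ is feasible for \eqref{eq: sos relaxation}, then $g_0(x) - \gamma + \sum_{j} \lambda_j g_j(x)$ is sos and hence nonnegative for all $x$, so for any $x$ feasible in \eqref{eq: pop} we get $g_0(x) - \gamma \geq -\sum_{j} \lambda_j g_j(x) \geq 0$, using $\lambda_j \geq 0$ and $g_j(x) \leq 0$. Taking the infimum over feasible $x$ and the supremum over $(\gamma,\lambda)$ gives $\mathrm{val}\eqref{eq: sos relaxation} \leq \mathrm{val}\eqref{eq: pop}$. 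The entire content is therefore to exhibit a single feasible $(\gamma^*,\lambda^*)$ for \eqref{eq: sos relaxation} whose value $\gamma^*$ equals the optimal value $p^*$ of \eqref{eq: pop}.

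The key structural point is that each $g_j$ being sos-convex is in particular convex, so \eqref{eq: pop} is a convex program; with its optimal value finite and Slater's condition holding, the infimum is attained at some $x^*$ and convex strong duality supplies KKT multipliers $\lambda^* \in \R^m_{\geq 0}$ with stationarity $\nabla g_0(x^*) + \sum_j \lambda_j^* \nabla g_j(x^*) = 0$ and complementary slackness $\lambda_j^* g_j(x^*) = 0$. I would then form the Lagrangian $L(x) \defeq g_0(x) + \sum_j \lambda_j^* g_j(x)$. As a nonnegative combination of sos-convex polynomials, $L$ is again sos-convex, and by stationarity $x^*$ is a global minimizer of $L$ with $L(x^*) = g_0(x^*) = p^*$, the last equality coming from complementary slackness.

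The crucial tool, and the step I expect to be the main obstacle, is a lemma on sos-convex polynomials: if $p$ is sos-convex then for every point $a$ the first-order gap $p(x) - p(a) - \nabla p(a)^T(x-a)$ is a sum of squares in $x$. I would prove this from the integral form of the Taylor remainder, $p(x) - p(a) - \nabla p(a)^T(x-a) = \int_0^1 (1-t)\,(x-a)^T \nabla^2 p\big(a + t(x-a)\big)(x-a)\, dt$. For each fixed $t \in [0,1]$ the integrand is the sos biquadratic form $y^T \nabla^2 p(z) y$ from \Cref{Defn: sos convex} composed with the affine substitution $z = a + t(x-a)$, $y = x - a$, and affine substitution preserves the sos property; hence each integrand is sos in $x$ with a Gram matrix $Q_t \succeq 0$ as in \Cref{thm: sos sdp}. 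Integrating against the nonnegative weight $(1-t)$ produces the Gram matrix $\int_0^1 (1-t)\,Q_t\, dt \succeq 0$, so the gap is itself sos. Applying this lemma to $L$ at $a = x^*$ and using $\nabla L(x^*) = 0$ shows that $L(x) - p^* = g_0(x) - p^* + \sum_j \lambda_j^* g_j(x)$ is sos, so $(\gamma^*,\lambda^*) = (p^*,\lambda^*)$ is feasible for \eqref{eq: sos relaxation} with value $p^*$. Combined with weak duality, this forces equality of the two optimal values.

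Finally, for recovery I would pass to the SDP dual of \eqref{eq: sos relaxation}, which is the first-order moment relaxation: its variables are pseudo-moments of degree up to $d$, and complementary slackness between the two SDPs at optimality pins down the degree-one pseudo-moments. I would argue that the vector of these first moments is feasible for \eqref{eq: pop} and attains $p^*$, so reading it off returns an optimal $x^*$. The two delicate points to handle with care are the measurability and interchange arguments hidden in the integral representation of the lemma, and verifying that the dual optimal moment vector is a genuine feasible point of \eqref{eq: pop} rather than a spurious pseudo-moment sequence; the sos-convexity of the $g_j$ is exactly what rules out the latter.
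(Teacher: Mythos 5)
Your argument follows essentially the same route as the paper's: weak duality is immediate, and the substantive direction is proved by taking the Lagrangian $L = g_0 + \sum_j \lambda_j^* g_j$ at an optimal primal point $x^*$, observing that $L$ is sos-convex, and invoking the key fact that for an sos-convex polynomial the first-order Taylor gap $p(y)-p(x)-\nabla p(x)^T(y-x)$ is sos in order to conclude that $L(\cdot)-\gamma^*$ is sos. The paper cites this key fact (Theorem 3.1 of the sos-convexity reference, originally a lemma of Helton and Nie), whereas you re-derive it from the integral form of the remainder; your derivation is sound, and the measurability issue you flag can be dispatched cleanly by writing $y^T\nabla^2 p(z)y=\sum_i h_i(z,y)^2$ and noting that the resulting Gram matrix depends polynomially on $t$ (or by replacing the integral with a nonnegative-weight quadrature rule, as the paper does elsewhere in \Cref{lem:matrix quadrature}). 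The paper reaches the multipliers via the convex Farkas lemma rather than KKT, but these are interchangeable here.

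The one genuine gap is your claim that finiteness of the optimal value together with Slater's condition implies that the infimum of \eqref{eq: pop} is attained. This is false for general convex programs (consider minimizing $e^{-x}$ over $\R$, where the infimum $0$ is not attained and Slater holds vacuously), and Slater's condition plays no role in attainment; what saves the argument is that the $g_j$ are \emph{polynomials}, for which a Frank--Wolfe-type theorem of Belousov (cited in the paper as \cite{Belousov2002}) guarantees that a convex polynomial program with finite value attains its optimum. Your KKT construction genuinely needs this point --- without a minimizer $x^*$ there is no stationarity condition to exploit --- so the attainment step must be justified by that theorem rather than by Slater's condition. With that citation inserted, and with the recovery claim deferred to the moment-relaxation dual as in Lasserre's Theorem 3.3 (the paper likewise does not prove this part), your proof is complete.
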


This result is already proven by Lasserre in \citep{lasserre2009} using a lemma of Helton and Nie from~\citep{helton2010semidefinite}.
For completeness and for the benefit of the reader, we give an alternative short proof of the first claim.

\begin{proof}
Recalling that an sos polynomial is nonnegative and that $\lambda_j\geq0$ for $j=1,\ldots,m$, it is easy to see that the optimal value of \eqref{eq: pop} is larger than or equal to the optimal value of~\eqref{eq: sos relaxation}.
To show the opposite inequality, let $\gamma^*$ be the optimal value of \eqref{eq: pop}. Then, the convex function $x \mapsto g_0(x) - \gamma^*$ is nonnegative over the set $\{ x \mid g_j(x) \leq 0, j=1,\ldots,m\}$. By the convex Farkas lemma (see, e.g.,~\citep[Theorem 2.1]{s_lemma}), there exists a nonnegative vector $\lambda^* \in \R^{m}$ such that $p(x) \defeq g_0(x)- \gamma^* + \sum_{j=1}^{m} \lambda^*_j g_j(x) \geq 0$ for all $x \in \Rn$.
Notice that $p(x)$ is sos-convex since it is a conic combination of sos-convex polynomials.
Thus, by~\citep[Theorem 3.1]{sosconvexity}, the polynomial $q(x,y) \defeq p(y) - p(x) - \nabla p(x)^T (y-x)$ is sos.
Let $x^*$ be an optimal solution to \eqref{eq: pop} (such a vector must exist \cite{Belousov2002}). Observe that the polynomial $y \mapsto q(x^*,y)$ is also sos (since it is the restriction of $q(x,y)$ to $x=x^*$). By optimality of $x^*$ to \eqref{eq: pop}, we have $p(x^*) \leq 0$.
Since $p$ is nonnegative, we have $p(x^*) = 0$ and $\nabla p(x^*) = 0$.
Thus, $p(y) = q(x^*,y)$, and hence $p(y)$ must be sos.
Therefore, $\gamma^*,\lambda^*$ is feasible to \eqref{eq: sos relaxation}, and hence the optimal value of \eqref{eq: sos relaxation} is at least $\gamma^*$; i.e., the optimal value of \eqref{eq: pop}.

\end{proof}

For a proof of the second claim and an explicit expression of the dual of \eqref{eq: sos relaxation}, see Theorem 3.3 from \citep{lasserre2009}.


\subsection{Error rates of Taylor remainders}\label{SSec: Taylor remainders}
In this subsection, we review certain error rates of multivariate Taylor expansions that will be used in our arguments.
We denote by $\nabla^d f$ the $d$\textsuperscript{th} order symmetric tensor of order-$d$ partial derivatives of the function $f$.
We denote the tensor product of a set of vectors $x_1,\ldots,x_d\in\Rn$ with $x_1\boxtimes x_2 \boxtimes \ldots  \boxtimes x_d$.\footnote{We use this slightly nonstandard notation to avoid confusion with the Kronecker product.}
We use the notation $x^{\boxtimes d} $ to denote the tensor product of a vector $x\in\Rn$ with itself $d$ times.
With this notation, we can define 
the $d$\textsuperscript{th}-order Taylor expansion of a $d$-times differentiable function $f$ at a point $\bar{x}$ as
$$T_{\bar{x},d}(x) \mathrel{\mathop{:}}= f(\bar{x}) + \sum_{i=1}^d \frac{1}{i!}\langle \nabla^i  f(\bar{x}), (x-\bar{x})^{\boxtimes i} \rangle,$$
where $\langle \cdot , \cdot\rangle$ denotes the standard tensor inner product.
The \emph{remainder} or \emph{error} term of the Taylor expansion is
$$R_{\bar{x},d}(x) \mathrel{\mathop{:}}= f(x) - T_{\bar{x},d}(x).$$
For a $d$\textsuperscript{th}-order tensor $D$, let us define the following norm
$$\|D\| \defeq \underset{\|x_1\|, \ldots, \|x_d\|\le 1}{\max} \langle D , x_1\boxtimes x_2 \boxtimes \ldots  \boxtimes x_d \rangle,$$
where $||x_i||$ denotes the Euclidean 2-norm of the vector $x_i \in \Rn$. Note that for cases of $d = 1$ and $d=2$, this expression reduces to the standard Euclidean norm and the spectral norm, respectively.

We will need the following lemma in \Cref{Sec: Main Proof}.

\begin{lemma}[see, e.g., inequality (11) in~\cite{baes2009estimate}]
\label{Lem: Gradient remainder}
\label{Lem: Hessian Remainder}
Fix a vector $\bar{x}\in \Rn$.
Suppose $\nabla^d f$ has a Lipschitz constant $L$ over a convex set $C$ containing $\xbar$, i.e., 
\[
\| \nabla^d f(x) - \nabla^d f(y) \| \leq L \|x-y\|
\]
for all $x,y \in C$.
Then, for any $x \in C$, we have
\[
\|\grad R_{\xbar,d}(x)\| \leq \frac{L}{d!}\|x-\bar{x}\|^{d}.
\]
and 
\[
\|\Hess R_{\xbar,d}(x)\| \leq \frac{L}{(d-1)!}\|x-\bar{x}\|^{d-1}.
\]
\end{lemma}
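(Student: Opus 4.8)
The plan is to reduce both inequalities to a single estimate by exploiting the fact that differentiation commutes with Taylor truncation. Writing $h \defeq x - \xbar$, the key observation is that for each $k$ the order-$k$ derivative tensor of the remainder is itself a Taylor remainder: since $\grad^k T_{\xbar,d}$ is exactly the order-$(d-k)$ Taylor expansion of the tensor-valued map $\grad^k f$ about $\xbar$, one has
$$\grad^k R_{\xbar,d}(x) = \grad^k f(x) - \left(\text{order-}(d-k)\text{ Taylor expansion of } \grad^k f \text{ at } \xbar\right).$$
In particular $\grad R_{\xbar,d}$ is the order-$(d-1)$ Taylor remainder of $\grad f$, and $\Hess R_{\xbar,d}$ is the order-$(d-2)$ Taylor remainder of $\Hess f$. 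In both cases the top-order derivative appearing is $\grad^d f$, which is $L$-Lipschitz on $C$ by hypothesis. I would verify the commutation claim directly by differentiating the explicit sum defining $T_{\xbar,d}$ term by term and reindexing the sum.

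Next I would invoke the integral form of the Taylor remainder, applied componentwise to the tensor-valued function $g = \grad^k f$ so that it holds as a tensor identity. Writing $m \defeq d-k$ and using that $\grad^m g = \grad^d f$, this gives
$$\grad^k R_{\xbar,d}(x) = \int_0^1 \frac{(1-t)^{m-1}}{(m-1)!}\left(\grad^d f(\xbar + th) - \grad^d f(\xbar)\right)\!\left[h^{\boxtimes m}\right] dt,$$
where the bracket $[\,h^{\boxtimes m}]$ denotes contraction of $m$ of the slots of the order-$d$ tensor against copies of $h$, leaving an order-$k$ tensor (the identity uses $\int_0^1 \frac{(1-t)^{m-1}}{(m-1)!}\,dt = \frac{1}{m!}$ to absorb the explicit $\grad^d f(\xbar)$ term into the integral).

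I would then bound the integrand using the operator norm of Section~\ref{SSec: Taylor remainders}. By multilinearity and homogeneity, contracting an order-$d$ tensor $A$ against $m$ copies of $h$ and $k$ unit vectors is a full contraction, so $\|A[h^{\boxtimes m}]\| \le \|A\|\,\|h\|^{m}$; applying this with $A = \grad^d f(\xbar+th)-\grad^d f(\xbar)$ and the Lipschitz bound $\|A\| \le Lt\|h\|$ gives an integrand of norm at most $Lt\|h\|^{m+1}$. Pulling out constants leaves the scalar Beta integral $\int_0^1 (1-t)^{m-1} t\, dt = \tfrac{1}{m(m+1)}$, and combining with $\tfrac{1}{(m-1)!}$ yields
$$\|\grad^k R_{\xbar,d}(x)\| \le \frac{L}{(m+1)!}\,\|h\|^{m+1} = \frac{L}{(d-k+1)!}\,\|h\|^{d-k+1}.$$
Setting $k=1$ and $k=2$ produces the two claimed inequalities.

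I expect the main obstacle to be bookkeeping rather than anything conceptual: carefully justifying the commutation of $\grad^k$ with the Taylor truncation in tensor notation, and confirming that contracting the tensor operator norm against $m$ copies of $h$ behaves submultiplicatively and picks up exactly the factor $\|h\|^{m}$. Once those two points are in place, the remainder of the argument is the standard integral-remainder estimate followed by the Beta-function evaluation.
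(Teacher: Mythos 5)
Your proposal is correct: the commutation of $\grad^k$ with Taylor truncation, the integral form of the remainder with the $\grad^d f(\xbar)$ term absorbed via $\int_0^1 \tfrac{(1-t)^{m-1}}{(m-1)!}\,dt = \tfrac{1}{m!}$, the bound $\|A[h^{\boxtimes m}]\| \le \|A\|\,\|h\|^m$ for the injective tensor norm, and the Beta integral all check out, and convexity of $C$ is used exactly where needed to keep $\xbar+th$ in $C$. The paper does not prove this lemma but cites inequality (11) of \cite{baes2009estimate}; your argument is essentially that standard one, so there is nothing further to compare.
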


\section{Algorithm Definition}\label{Sec: Algorithm}
For a given integer $d \geq 3$, we consider the task of minimizing a function $f$ which is assumed to have derivatives up to order $d$, and a local minimum $x^*$ satisfying $\Hess f(x^*) \succ 0$.
We also assume that the $d$\textsuperscript{th} derivative of $f$ is locally Lipschitz around the point $x^*$, i.e., there is a radius $r_L > 0$, and a scalar $L \geq 0$, such that for points $x,y$ in the set $\{z \in \Rn \mid \| z- x^*\| \leq r_L \}$, we have
\[
\| \nabla^d f(x) - \nabla^d f(y) \| \leq L \|x-y\|.
\]
Note that the latter assumption is always satisfied if the $d+1$\textsuperscript{th} derivative of $f$ exists and is continuous.
Our goal is to minimize $f$ by iteratively minimizing a surrogate function of the type
\[
T_{x_k,d}(x) + t||x-x_k||^{d'},
\]
where $x_k$ is our current iterate, $T_{x_k,d}$ is the Taylor expansion of $f$ of order $d$ at $x_k$, $d'$ is the smallest even integer greater than $d$ (as we require the surrogate to be a polynomial), and $t$ 
%
%
is chosen according to the following sum of squares program:

\begin{equation}\label{eq:min t}
\begin{aligned}
\min_{t \in \R} \quad & t \\
\st \quad & T_{x_k,d}(x) + t||x-x_k||^{d'} \quad \text{sos-convex} \\
& t \geq 0.
\end{aligned}
\end{equation}
In view of \Cref{thm: sos sdp} and the remarks after \Cref{Defn: sos convex}, this program can be reformulated as an SDP of size polynomial in $n$. Letting $t(x_k)$ denote the optimal value of \eqref{eq:min t} for a given $x_k$, we define our surrogate function to be
\begin{equation}\label{eq:psi}
\psi_{x_k,d}(x) \defn T_{x_k,d}(x) + t(x_k)||x-x_k||^{d'}.
\end{equation}
In our algorithm, we choose $x_{k+1}$ to be the minimizer of $\psi_{x_k,d}$ (which exists and is unique; see \Cref{thm:well defined} below).
By \Cref{thm: lasserre exact}, since $\psi_{x_k,d}$ is sos-convex, we can find its minimizer via another SDP of size polynomial in $n$.

If $x_k$ is far from $x^*$ so that $\nabla^2 f(x_k)$ is not positive definite, it may occur that \eqref{eq:min t} is infeasible.
If this occurs, we fix a positive scalar\footnote{Our analysis applies to any positive value of $\varepsilon$.} $\varepsilon$ and instead solve the SDP:
\begin{equation}\label{eq:min t bar}
\begin{aligned}
\min_{\bar{t} \in \R} \quad & \bar{t} \\
\st \quad & T_{x_k,d}(x) + \frac{1}{2}\bigg(\varepsilon-\lambda_{\min}\nabla^2 f(x_k)\bigg) ||x-x_k||^2 + \bar{t}||x-x_k||^{d'} \quad \text{sos-convex} \\
& t \geq 0.
\end{aligned}
\end{equation}
Let $\bar{t}(x_k)$ denote the optimal value of \eqref{eq:min t bar} and define
\begin{equation}\label{eq:psi bar}
\bar{\psi}_{x_k,d}(x) \defn T_{x_k,d}(x) + \frac{1}{2}\bigg(\varepsilon-\lambda_{\min}\nabla f(x_k)\bigg) ||x-x_k||^2 + \bar{t}(x_k)||x-x_k||^{d'}.
\end{equation}
We then let $x_{k+1}$ be the minimizer $\bar{\psi}_{x_k,d}$ (which again exists and is unique; see \Cref{thm:well defined} below).
As before, we can find a minimizer of $\bar{\psi}_{x_k,d}$ by solving an SDP of size polynomial in $n$; see \Cref{thm: lasserre exact}.

Our overall algorithm is summarized below: 

\begin{algorithm2e}[H]
\caption{$d$\textsuperscript{th}-order Newton method}
\label{alg:main}
\LinesNumbered
\SetKwInput{Parameter}{Parameter}
\SetKwInput{Input}{Input}
\DontPrintSemicolon
\Parameter{$\varepsilon > 0$}
\Input{$x_0 \in \Rn$}
\For{$k=0,\dots$}{
\eIf{$\nabla^2 f(x_k) \succ 0$}{
Solve \eqref{eq:min t} to find $t(x_k)$ \label{line:min t}\;
Let $x_{k+1}$ be the minimizer of $\psi_{x_k,d}$ (see \eqref{eq:psi}) \label{line: min psi}\;
}{
Solve \eqref{eq:min t bar} to find $\bar{t}(x_k)$ \label{line:min t bar}\;
Let $x_{k+1}$ be the minimizer of $\bar{\psi}_{x_k,d}$ (see \eqref{eq:psi bar})\label{line: min psi bar}\;
}
}
\end{algorithm2e}



\section{Algorithm Analysis and Convergence}\label{Sec: Main Proof}

In this section, we present our main technical results. Theorem~\ref{thm:well defined} shows that our algorithm is well-defined for all initial conditions.
Theorem~\ref{thm:rate} gives our convergence result.
We remind the reader that the assumptions made on $f$ are described in the first paragraph of \Cref{Sec: Algorithm}.
In particular, the function $f$ is not required to be convex, and the $d$\textsuperscript{th} derivatives of $f$ are not required to be globally Lipschitz.

\begin{theorem}\label{thm:well defined}
Algorithm~\ref{alg:main} is well-defined in the sense that
\begin{enumerate}[(i)]
\item the problems \eqref{eq:min t} and \eqref{eq:min t bar} are always feasible when required at Lines~\ref{line:min t} and \ref{line:min t bar}, and
\item the functions $\psi_{x_k,d}$ and $\bar{\psi}_{x_k,d}$ (see \eqref{eq:psi} and \eqref{eq:psi bar}) always possess a unique minimizer when required at Lines~\ref{line: min psi} and \ref{line: min psi bar}.
\end{enumerate}
\end{theorem}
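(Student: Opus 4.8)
The plan is to handle the two claims separately, reducing part (i) to a single feasibility lemma and part (ii) to a convexity/coercivity argument. For part (i), I would first reduce both \eqref{eq:min t} and \eqref{eq:min t bar} to the statement: \emph{if a polynomial $p$ of degree at most $d$ satisfies $\nabla^2 p(x_k) \succ 0$, then $p(x) + t\|x-x_k\|^{d'}$ is sos-convex for all sufficiently large $t \ge 0$.} For \eqref{eq:min t} one takes $p = T_{x_k,d}$, so that $\nabla^2 p(x_k) = \nabla^2 f(x_k) \succ 0$ holds on the relevant branch of \Cref{alg:main}; for \eqref{eq:min t bar} one takes $p = T_{x_k,d} + \tfrac12(\varepsilon - \lambda_{\min}\nabla^2 f(x_k))\|x-x_k\|^2$, whose Hessian at $x_k$ equals $\nabla^2 f(x_k) + (\varepsilon - \lambda_{\min}\nabla^2 f(x_k))I \succeq \varepsilon I \succ 0$. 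Since feasibility of the SDP is equivalent to existence of a feasible $t \ge 0$, the lemma yields (i). I would also record here that the set of feasible $t$-values is upward closed (adding a larger multiple of the sos-convex polynomial $\|x-x_k\|^{d'}$ preserves sos-convexity) and closed (sos-convex polynomials of bounded degree form a closed set and $t \mapsto p + t\|x-x_k\|^{d'}$ is continuous); this shows the optimal value $t(x_k)$ is attained, a fact I reuse in part (ii).

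To prove the feasibility lemma, set $z = x - x_k$ and examine the Hessian biform $F_t(z,y) := y^T \nabla^2[p(x_k + z) + t\|z\|^{d'}]\,y$. Writing $\nabla^2 p(x_k + z) = \nabla^2 p(x_k) + \sum_{i=1}^{d-2} A_i(z)$ with each $A_i$ homogeneous of degree $i$ in $z$, and using the explicit decomposition $y^T\nabla^2\|z\|^{d'} y = d'\|z\|^{d'-2}\|y\|^2 + d'(d'-2)\|z\|^{d'-4}(z^Ty)^2$ (a sum of two sos terms), $F_t$ is the sum of the fixed sos form $y^T\nabla^2 p(x_k)y$ (sos because $\nabla^2 p(x_k)\succ 0$), perturbation terms $y^TA_i(z)y$ of $z$-degree $1,\dots,d-2$, and $t$ times a coercive sos form whose leading part $t\,d'\|z\|^{d'-2}\|y\|^2$ has $z$-degree $d'-2 > d-2$. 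A direct estimate shows $F_t$ is a positive definite quadratic form in $y$ for every $z$ once $t$ is large: the constant-in-$z$ reservoir $y^T\nabla^2 p(x_k) y$ controls the perturbations near $z = 0$, while the strictly-higher-degree regularizer controls them for $\|z\|$ large.

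The main obstacle is precisely the final step: upgrading this positive definiteness to an explicit sum-of-squares certificate, since positive (even positive definite) matrix biforms need not be sos when $n \ge 2$. I would handle this via a domination lemma that exploits the explicit sos structure of the two ``good'' pieces, $y^T\nabla^2 p(x_k)y$ and $t\,d'(z^Tz)^{m-1}\|y\|^2$ (with $m = d'/2$), rather than mere positivity, certifying that for large $t$ they dominate each intermediate biform $y^TA_i(z)y$ in the sos sense. As a sanity check, in dimension $n = 1$ the biform reduces to $y^2 g(z)$ with $g$ a nonnegative univariate polynomial for large $t$, which is automatically sos; the multivariate case is where the work lies, and homogenizing in $z$ to reduce to forms, together with the quantitative dominance afforded by the gap $d'-2 > d-2$, is the key.

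For part (ii), I would argue uniformly for $\psi_{x_k,d}$ and $\bar\psi_{x_k,d}$ using only convexity and the Hessian at the center. Since $t(x_k)$ (resp.\ $\bar t(x_k)$) is attained by the closedness remark above, the surrogate is genuinely sos-convex, hence convex. Crucially, the regularizer's Hessian vanishes at $z=0$ (as $d' > 2$), so $\nabla^2\psi_{x_k,d}(x_k) = \nabla^2 f(x_k) \succ 0$ (resp.\ $\nabla^2\bar\psi_{x_k,d}(x_k) = \nabla^2 f(x_k) + (\varepsilon-\lambda_{\min}\nabla^2 f(x_k))I \succ 0$), independently of the optimal regularization value. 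Consequently, for every direction $v \ne 0$ the univariate restriction $s \mapsto \psi_{x_k,d}(x_k + sv)$ is a convex polynomial with strictly positive second derivative at $s = 0$, hence non-affine, hence of even degree with positive leading coefficient, and therefore tends to $+\infty$ as $|s| \to \infty$. A convex function that is coercive along every ray from a single point has compact sublevel sets, so the surrogate is coercive and attains its minimum, giving existence. Uniqueness then follows without any further strict-convexity hypothesis: two distinct minimizers would force the convex surrogate to be constant on the segment joining them, hence constant on the entire line through them (a convex univariate polynomial constant on an interval is globally constant), contradicting coercivity. This establishes (ii).
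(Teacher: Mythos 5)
Your part (i) contains a genuine gap, and it sits exactly where you yourself locate ``the main obstacle'': the domination lemma is never proved, and the route you sketch for it does not work for $d\ge 4$. You propose to certify sos-convexity of $p+t\|x-x_k\|^{d'}$ for large $t$ using only the two explicitly-sos reservoirs $y^T\nabla^2 p(x_k)y$ (of degree $0$ in $z=x-x_k$) and $t\,d'\|z\|^{d'-2}\|y\|^2$ (of degree $d'-2$ in $z$). In any Gram representation of the Hessian biform over the basis $\phi_{d'/2-1}(z)\otimes y$, these two pieces produce positive diagonal entries only at the basis monomials of $z$-degree $0$ and $z$-degree $d'/2-1$. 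A generic perturbation $y^TA_i(z)y$ with $1\le i\le d-2$ forces nonzero Gram entries in rows indexed by monomials of intermediate $z$-degree, and a positive semidefinite matrix with a zero diagonal entry must have the entire corresponding row equal to zero; already a linear-in-$z$ term $z_1y_1y_2$ requires a positive diagonal at the basis element $z_1y_2$, which your reservoirs supply only when $d'=4$, i.e., $d=3$. Homogenizing in $z$ does not create the missing intermediate diagonal, and no choice of $t$, however large, repairs this. This is precisely why the paper proves \Cref{lem:sosconvex interior}: the polynomial $\|x\|^2+\|x\|^{d'}$ lies in the \emph{interior} of the sos-convex cone, meaning its Hessian biform admits a Gram matrix that is positive definite over the \emph{full} basis, intermediate degrees included, even though the polynomial itself has no intermediate-degree terms. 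That fact is obtained by the inductive Schur-complement construction of Claim~0 (adding and subtracting $(x^Tx)^k$ and burying the subtraction in off-diagonal blocks), and it is the one non-routine ingredient of part (i). Once it is in hand, the rest of your reduction --- rescaling $z$ so that the perturbation's coefficients fall inside the interiority radius, then undoing the scalings --- is exactly the paper's \Cref{lem:t bounded} applied with $B=\{x_k\}$, and your treatment of \eqref{eq:min t bar} via the augmented polynomial with Hessian $\succeq\varepsilon I$ at $x_k$ matches the paper's. Without the interior-point lemma, however, part (i) is unproven.

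Your part (ii) is correct, and your remark that $t(x_k)$ is attained (the feasible set of $t$ is closed and upward-closed) is a sound and worthwhile addition. Your route to coercivity differs from the paper's: the paper (\Cref{lem: unique min}) lower-bounds the convex surrogate by a coercive quadratic using the Clenshaw--Curtis quadrature bound of \Cref{lem:matrix quadrature}, whereas you restrict to rays through $x_k$, note that each restriction is a convex univariate polynomial with strictly positive second derivative at the origin and hence has even degree with positive leading coefficient, and conclude compactness of sublevel sets. Your argument is more elementary and self-contained; the paper's buys an explicit quantitative lower bound on $\lambda_{\min}$ of an averaged Hessian, which it needs again in the proof of \Cref{thm:rate}, so nothing is saved overall. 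The uniqueness step (a segment of minimizers forces constancy on a line, contradicting coercivity) is identical in both.
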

\begin{theorem}\label{thm:rate}
There exist constants $r,c>0$ such that if $||x_0-x^*||\leq r$, then the sequence $\{x_k\}$ generated by \Cref{alg:main} satisfies $$||x_{k+1} - x^*|| \leq c ||x_k - x^*||^d$$ for all $k$. 
\end{theorem}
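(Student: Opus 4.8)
The plan is to establish the local order-$d$ convergence by controlling how much the surrogate $\psi_{x_k,d}$ deviates from the true Taylor expansion $T_{x_k,d}$, and then leveraging the fact that $x_{k+1}$ is the exact minimizer of this surrogate. First I would work in a small ball $\|x-x^*\|\le r$ on which $\nabla^2 f \succ 0$ (so that the algorithm uses \eqref{eq:min t} rather than \eqref{eq:min t bar}), and on which the Lipschitz bound of Lemma~\ref{Lem: Hessian Remainder} is valid. The key conceptual step is to obtain a bound of the form $t(x_k)\le K\|x_k-x^*\|^{d-d'+1}$ (equivalently, a uniform bound on the size of the regularization relative to $\|x-x_k\|^{d'}$) by exhibiting an explicit \emph{feasible} $t$ for \eqref{eq:min t}. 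To produce such a feasible point I would take $t$ just large enough that the Hessian of the regularized surrogate dominates the Hessian of the full function $f$; since $\nabla^2 f(x^*)\succ 0$ and, by Lemma~\ref{Lem: Hessian Remainder}, the Hessian of the Taylor remainder is $O(\|x-x_k\|^{d-1})$, one can choose $t$ of the order dictated by the Lipschitz constant $L$ and the local strong convexity modulus, certifying sos-convexity (not merely convexity) through the algebraic structure in Theorem~\ref{thm: sos sdp} and the remarks after Definition~\ref{Defn: sos convex}.

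Once $t(x_k)$ is controlled, the core of the argument is a first-order optimality comparison. Since $x_{k+1}$ minimizes the sos-convex function $\psi_{x_k,d}$, we have $\nabla \psi_{x_k,d}(x_{k+1})=0$, i.e.
\begin{equation}\label{eq:opt-cond}
\nabla T_{x_k,d}(x_{k+1}) + t(x_k)\, d'\, \|x_{k+1}-x_k\|^{d'-2}(x_{k+1}-x_k) = 0.
\end{equation}
On the other hand $\nabla f(x^*)=0$. The plan is to write $\nabla f(x_{k+1}) = \nabla f(x_{k+1}) - \nabla \psi_{x_k,d}(x_{k+1})$ and bound the right-hand side. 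The difference $\nabla f(x_{k+1}) - \nabla T_{x_k,d}(x_{k+1}) = \nabla R_{x_k,d}(x_{k+1})$ is controlled by the gradient bound in Lemma~\ref{Lem: Gradient remainder}, giving a term of order $\|x_{k+1}-x_k\|^{d}$, while the regularization term in \eqref{eq:opt-cond} contributes a term of order $t(x_k)\|x_{k+1}-x_k\|^{d'-1}$, which by the bound on $t(x_k)$ is also of order $\|x_k-x^*\|^{d-d'+1}\|x_{k+1}-x_k\|^{d'-1}$. Using strong convexity of $f$ near $x^*$ to pass from a bound on $\|\nabla f(x_{k+1})\|$ to a bound on $\|x_{k+1}-x^*\|$ (via $\|\nabla f(x_{k+1})\| = \|\nabla f(x_{k+1})-\nabla f(x^*)\| \ge \mu \|x_{k+1}-x^*\|$ for some $\mu>0$), and combining with the triangle inequality $\|x_{k+1}-x_k\| \le \|x_{k+1}-x^*\| + \|x_k-x^*\|$, I would assemble an inequality of the form $\|x_{k+1}-x^*\| \le c\,\|x_k-x^*\|^{d}$ after absorbing lower-order contributions.

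I expect the main obstacle to be two intertwined difficulties. The first is bootstrapping: the bounds on the remainder terms are in terms of $\|x_{k+1}-x_k\|$, but $x_{k+1}$ is only implicitly defined, so I must first establish a crude a priori bound (e.g. $\|x_{k+1}-x_k\| = O(\|x_k-x^*\|)$, ensuring $x_{k+1}$ stays in the ball) before the sharp order-$d$ estimate can be extracted; this requires choosing $r$ small enough that a self-improving inequality closes, and verifying the recursion $\|x_{k+1}-x^*\|\le c\|x_k-x^*\|^d$ keeps the iterates inside the basin for all $k$. The second, and genuinely novel, obstacle is making the $t(x_k)$ bound rigorous: unlike the classical Newton analysis where the next iterate is given by an explicit formula, here $t(x_k)$ is the optimal value of an SDP, so I cannot differentiate an explicit expression. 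Instead I must argue purely through \emph{feasibility} — constructing an admissible $t$ and invoking optimality of $t(x_k)$ to get the upper bound $t(x_k)\le t$ — and the delicate part is certifying that the constructed surrogate is genuinely sos-convex (a stronger, algebraic condition) rather than merely convex. I would handle this by bounding $y^\top \nabla^2 \psi_{x_k,d}(x)\, y$ from below by a positive multiple of a sum of squares using the Lipschitz control on the Hessian remainder together with positive definiteness of $\nabla^2 f(x^*)$, thereby exhibiting the required positive semidefinite Gram matrix $Q$ from Theorem~\ref{thm: sos sdp}.
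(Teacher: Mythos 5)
Your proposal is essentially correct in outline, but it takes a genuinely different route from the paper at the central step. Both arguments share the same two pillars: a uniform upper bound on the regularization weight $t(x_k)$ over a neighborhood of $x^*$, obtained by exhibiting a feasible point for \eqref{eq:min t} (this is the paper's \Cref{lem:t bounded}), and the observation that since $d'-1\ge d$, a \emph{uniform} bound on $t$ already makes the regularization term $t(x_k)d'\|\cdot\|^{d'-1}$ negligible at order $d$ (your exponent $d-d'+1$ is nonpositive, so your stated bound is weaker than, and implied by, uniform boundedness). Where you diverge is in how the gradient information is converted into a bound on $\|x_{k+1}-x^*\|$. You impose the first-order optimality condition \emph{at $x_{k+1}$} and then invert via local strong convexity of $f$ itself, $\|\nabla f(x_{k+1})\|\ge\mu\|x_{k+1}-x^*\|$. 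The paper instead evaluates $\nabla\psi_{x_k,d}$ \emph{at $x^*$}, writes $\nabla\psi_{x_k,d}(x^*)=\bigl(\int_0^1\nabla^2\psi_{x_k,d}(x_{k+1}+s(x^*-x_{k+1}))\,ds\bigr)(x^*-x_{k+1})$, and lower-bounds the integrated Hessian by a dimension-free multiple of $\nabla^2\psi_{x_k,d}(x^*)$ using a Clenshaw--Curtis quadrature inequality for positive semidefinite polynomial matrices (\Cref{lem:matrix quadrature}) together with \Cref{lem:psi at optimum}. The payoff of the paper's arrangement is that every quantity is evaluated at $x^*$ or controlled by global convexity of the polynomial $\psi_{x_k,d}$, so no a priori localization of $x_{k+1}$ is needed. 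Your arrangement requires exactly the bootstrap you identify: \Cref{Lem: Gradient remainder} applied at $x_{k+1}$ and the strong-convexity inversion both presuppose that $x_{k+1}$ lies in the ball where the Lipschitz and curvature bounds hold. This gap is closable --- e.g.\ from $\psi_{x_k,d}(x_{k+1})\le\psi_{x_k,d}(x_k)=f(x_k)$ together with the coercivity estimate $\psi_{x_k,d}(x)\ge f(x_k)+\nabla f(x_k)^T(x-x_k)+\tfrac{1}{4(\bar d^2-1)}(x-x_k)^T\nabla^2 f(x_k)(x-x_k)$ (itself a consequence of the same quadrature lemma), one gets $\|x_{k+1}-x_k\|=O(\|\nabla f(x_k)\|)=O(\|x_k-x^*\|)$ --- but it is real work that the paper's route sidesteps, and it is the step your sketch leaves most open.

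The other thin spot is the feasibility certificate for \eqref{eq:min t}. Lower-bounding $y^T\nabla^2\psi_{x_k,d}(x)y$ by a positive multiple of a sum of squares does not by itself make $\psi_{x_k,d}$ sos-convex: being bounded below by an sos polynomial is not an algebraic certificate for the whole expression. What is actually needed (and what the paper does in \Cref{lem:sosconvex interior} and \Cref{lem:t bounded}) is to show that $\|x\|^2+\|x\|^{d'}$ lies in the \emph{interior} of the sos-convex cone, so that adding a sufficiently small (after rescaling) arbitrary cubic-and-higher perturbation preserves sos-convexity; the positive-definite part of the Hessian at $x_k$ then absorbs the quadratic terms. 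Your instinct to ``exhibit the Gram matrix'' is the right one, but the interior-of-the-cone argument, with its induction on degree and the scaling by $\alpha$, is the substantive content there and should not be elided.
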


The power $d$ in this theorem is referred to as the \emph{order} of convergence and the constant $c$ is referred to as the \emph{factor} of convergence. We note that the factor of convergence arising from our proof is explicit. 

To prove Theorems \ref{thm:well defined} and \ref{thm:rate}, we first establish some technical lemmas. Lemmas~\ref{lem:sosconvex interior} and~\ref{lem:t bounded} are used to prove the first claim of Theorem~\ref{thm:well defined}; Lemmas~\ref{lem:matrix quadrature} and~\ref{lem: unique min} are for the second claim; and Lemmas~\ref{lem:t bounded}, \ref{lem:matrix quadrature}, and~\ref{lem:psi at optimum} are employed in the proof of Theorem~\ref{thm:rate}.

In Lemma~\ref{lem:sosconvex interior}, we show that a particular polynomial is in the interior of the cone of sos-convex polynomials.
This is used in Lemma~\ref{lem:t bounded} to show that we can always make our surrogate functions defined in~\eqref{eq:min t} and~\eqref{eq:min t bar} sos-convex.

\begin{lemma}\label{lem:sosconvex interior}
Let $x \defeq (x_1,\ldots,x_n)$. The polynomial
\[
p(x) = x^Tx + (x^Tx)^d
\]
is in the interior of the cone of sos-convex polynomials in $n$ variables and of degree at most $2d$.
\end{lemma}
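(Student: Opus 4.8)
The plan is to pass from $p$ to its Hessian form and argue inside the structured sos cone that the paper has already set up. Write $q(x,y)\defeq y^T\nabla^2 p(x)y$; differentiating $p(x)=x^Tx+(x^Tx)^d$ gives
\[
q(x,y)=2\|y\|^2+2d(x^Tx)^{d-1}\|y\|^2+4d(d-1)(x^Tx)^{d-2}(x^Ty)^2 .
\]
By the remark following \Cref{Defn: sos convex}, a polynomial $\tilde p$ of degree $\le 2d$ is sos-convex iff $y^T\nabla^2\tilde p(x)y=z^TQz$ for some $Q\succeq 0$, where $z\defeq\phi_{d-1}(x)\otimes y$. Let $\Lambda(Q)\defeq z^TQz$ and let $\Sigma\defeq\Lambda(\{Q\succeq 0\})$ be the resulting sos cone inside the space $W$ of forms that are quadratic in $y$ and of $x$-degree $\le 2d-2$. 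Since $\Lambda$ is linear and surjective onto $W$, it is an open map, so if $q=z^TQz$ for some $Q\succ 0$ then $q\in\mathrm{int}_W\Sigma$. I claim this suffices: the map $\tilde p\mapsto y^T\nabla^2\tilde p(x)y$ is bounded and linear into $W$, so a small perturbation of the coefficients of $p$ perturbs $q$ by a small element of $W$; if $q$ is interior to $\Sigma$, the perturbed Hessian form stays in $\Sigma$, i.e. the perturbed polynomial stays sos-convex. Hence it is enough to produce a positive definite Gram matrix $Q\succ 0$ with $\Lambda(Q)=q$.

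Next I would record the \emph{natural} Gram matrix $Q_{\mathrm{nat}}$ coming from the explicit decomposition above. Expanding multinomially, $2d(x^Tx)^{d-1}\|y\|^2=2d\sum_c\sum_{|\beta|=d-1}\binom{d-1}{\beta}(x^\beta y_c)^2$ and $4d(d-1)(x^Tx)^{d-2}(x^Ty)^2=4d(d-1)\sum_{|\gamma|=d-2}\binom{d-2}{\gamma}\big(\sum_k x^{\gamma+e_k}y_k\big)^2$, so both are sums of squares of forms supported on the top block $B_{d-1}\defeq\{x^\beta y_c:|\beta|=d-1\}$, while $2\|y\|^2=2\sum_c y_c^2$ lives on the bottom block $B_0\defeq\{y_c\}$. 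Thus $Q_{\mathrm{nat}}$ is block diagonal, positive definite on $B_0$ (it equals $2I$) and on $B_{d-1}$ (a strictly positive diagonal plus a PSD rank contribution), and identically zero on the intermediate monomials $\{x^\alpha y_c:1\le|\alpha|\le d-2\}$. Consequently $\ker Q_{\mathrm{nat}}$ is exactly the span of these intermediate monomials.

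To upgrade $Q_{\mathrm{nat}}$ to a positive definite Gram matrix I would argue by duality (equivalently, separate the nonempty affine set $\{Q:\Lambda(Q)=q\}$ from the open PD cone). If no $Q\succ 0$ exists with $\Lambda(Q)=q$, a separating hyperplane produces a symmetric $M\neq 0$ that is PSD, lies in $(\ker\Lambda)^\perp=\mathrm{range}\,\Lambda^{*}$ (i.e. $M$ is a moment matrix $M_{(\alpha,c),(\beta,c')}=\mu[x^{\alpha+\beta}y_cy_{c'}]$ of some functional $\mu$ on $W$), and satisfies $\langle M,Q_{\mathrm{nat}}\rangle\le 0$. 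Since $M,Q_{\mathrm{nat}}\succeq 0$ force $\langle M,Q_{\mathrm{nat}}\rangle\ge 0$, we get $\langle M,Q_{\mathrm{nat}}\rangle=0$, hence $\mathrm{range}(M)\subseteq\ker Q_{\mathrm{nat}}$, i.e. $M$ is supported on the intermediate block. The proof then reduces to the following claim, which I expect to be the \emph{main obstacle}: the only PSD moment matrix supported on the intermediate block is $M=0$.

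I would settle this last claim by a short combinatorial argument on the moments. Vanishing of the rows of $M$ indexed by $B_0$ gives $\mu[x^\rho y_cy_{c'}]=0$ for all $|\rho|\le d-1$, while vanishing of the rows indexed by $B_{d-1}$ gives $\mu[x^\rho y_cy_{c'}]=0$ for all $|\rho|\ge d-1$ (any such $\rho$ with $|\rho|\le 2d-2$ splits as $\beta+\beta'$ with $|\beta|=d-1$ and $|\beta'|\le d-1$, which is the entry $M_{(\beta,c),(\beta',c')}$). These two ranges cover every admissible degree $|\rho|\le 2d-2$, so $\mu\equiv 0$ and $M=0$, the desired contradiction. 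This yields $Q\succ 0$, hence $q\in\mathrm{int}_W\Sigma$, and therefore $p$ lies in the interior of the sos-convex cone. The delicate feature throughout is precisely the ``degree gap'': $q$ has no monomials of intermediate $x$-degree, so $Q_{\mathrm{nat}}$ is singular there, and the whole content of the lemma is that this missing diagonal mass can be supplied by off-diagonal (primal) / moment (dual) room while still representing $q$.
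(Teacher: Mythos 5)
Your proof is correct, but it reaches the key technical fact --- that the Hessian form $q(x,y)=y^T\nabla^2 p(x)y$ admits a \emph{positive definite} Gram matrix in the basis $\phi_{d-1}(x)\otimes y$ --- by a genuinely different route than the paper. The paper constructs such a matrix explicitly: it first proves by induction on $d$ (via Schur complements) that $1+d(x^Tx)^{d-1}=\phi_{d-1}(x)^TQ\phi_{d-1}(x)$ for some $Q\succ 0$, and then writes $q=(\phi_{d-1}(x)\otimes y)^T(Q\otimes 2I+Q')(\phi_{d-1}(x)\otimes y)$ with $Q'\succeq 0$; this is essentially your decomposition, except that the singular ``constant block plus top-degree block'' piece is replaced by the inductively built nonsingular Gram matrix of $1+d(x^Tx)^{d-1}$. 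You instead start from the singular natural Gram matrix $Q_{\mathrm{nat}}$, whose kernel is the span of the intermediate monomials, and show nonconstructively via conic duality that the affine fiber of Gram matrices representing $q$ must meet the open PD cone: a separating $M$ would be a PSD moment matrix with $\langle M,Q_{\mathrm{nat}}\rangle=0$, hence supported on the intermediate block, and your two-range moment argument ($|\rho|\le d-1$ from the $B_0$ rows, $d-1\le|\rho|\le 2d-2$ from the $B_{d-1}$ rows, which together cover all admissible degrees) correctly forces $M=0$. Your opening reduction --- that a PD Gram matrix plus surjectivity (hence openness) of the Gram map onto the space of Hessian forms yields interiority of $p$ itself --- is also sound, and in fact spells out a step the paper leaves implicit in its final sentence. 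The trade-off: the paper's induction is elementary and produces an explicit certificate, while your duality argument is more illuminating about \emph{why} the lemma holds (the missing diagonal mass on intermediate monomials can always be supplied within the fiber of the Gram map) and would generalize to other polynomials with the same degree gap, at the cost of being nonconstructive.
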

\begin{proof}
We first establish the following claim:

\noindent {\bf Claim 0.} For all $d \geq 0$, the polynomial
 
\[
\tilde{p}_d(x) = 1 + (d+1)(x^Tx)^{d}
\]
can be written as $\phi_d(x)^TQ\phi_d(x)$, where $\phi_d$ is the standard basis of monomials of degree up to $d$ with the monomials appearing in ascending order of degree, and $Q$ is a positive definite matrix.


To prove Claim 0, it suffices to show that for all $d \geq 0$, there exists a constant $\alpha_d > 0$ and a positive definite matrix $\hat{Q}_d$ such that $1 + \alpha_d (x^Tx)^d=\phi_d(x)^T \hat{Q}_d\phi_d(x)$. Indeed, if $\alpha_d < d+1$, we can observe that
$$\tilde{p}_d(x) = 1 + \alpha_d(x^Tx)^d + \left((d+1)-\alpha_d\right)(x^Tx)^d = \phi_d(x)^T \left( \hat{Q}_d + Q'\right) \phi_d(x),$$ where $Q'$ can be taken to be positive semidefinite since $((d+1)-\alpha_d)(x^Tx)^d$ is sos.
If $\alpha_d > d+1$, we can observe that $$\tilde{p}_d(x) = \frac{d+1}{\alpha_d} + (d+1)(x^Tx)^d + (1-\frac{d+1}{\jeff{\alpha_d}})= \phi_d(x)^T \left( \frac{d+1}{\alpha_d} \hat{Q}_d + Q'\right) \phi_d(x),$$ where $Q'$ can be taken to be positive semidefinite since $(1-\frac{d+1}{\jeff{\alpha_d}})$ is sos.

Let us now proceed by induction on $d$ to prove the claim made in the previous paragraph. The case of $d = 0$ is clear since we can take any $\alpha_0 > 0$ and the associated matrix $\hat{Q}_0$ is simply a $1 \times 1$ matrix containing the scalar $1+\alpha_0$. Now suppose that the induction hypothesis holds for $d = k$. To construct $\alpha_{k+1}$ and $\hat{Q}_{k+1}$, 
we will add matrices associated with the polynomials $1 + \alpha_k (x^Tx)^k$ and $\alpha(x^Tx)^{k+1} - \alpha_k (x^Tx)^k$, where $\alpha$ is an arbitrary scalar.
From the induction hypothesis, there exist a scalar $\alpha_k > 0$ and a matrix $\hat{Q}_k \succ 0$ of size $\binom{n+k}{k} \times \binom{n+k}{k}$ that satisfy $$1 + \alpha_k (x^Tx)^k = \phi_k(x)^T \hat{Q}_k \phi_k(x) = \phi_{k+1}(x)^T \bmat \hat{Q}_k & 0 \\ 0 & 0\emat \phi_{k+1}(x).$$ Meanwhile, observe that we can write
$$\alpha(x^Tx)^{k+1} - \alpha_k(x^Tx)^k = \phi_{k+1}(x)^T\bmat 0 & A \\ A^T & \alpha P\emat\phi_{k+1}(x)$$ for some matrices $A$ and $P\succ 0$, where the zero block is of size $\binom{n+k}{k} \times \binom{n+k}{k}$. Indeed, we can take the matrix $P$ to be diagonal with its diagonal entries equalling the coefficients of $(x^Tx)^{k+1}$ and move the coefficients of $\alpha_k(x^Tx)^k$ to the matrix $A$.
Adding the two identities, we observe that:
\[
1 + \alpha(x^Tx)^{k+1} = \phi_{k+1}(x)^T\bmat \hat{Q}_k & A \\ A^T & \alpha P\emat\phi_{k+1}(x).
\]
Since $\hat{Q}_k$ and $P$ are both positive definite matrices, by the Schur complement condition, whenever $\alpha P - A^T\hat{Q}_k^{-1}A \succ 0$, the matrix on the right-hand side of the above expression will be positive definite.
One can therefore choose $\alpha_{k+1}$ to be any large enough value of $\alpha$ that satisfies the previous condition and let $\hat{Q}_{k+1} \defn \bmat \hat{Q}_k & A \\ A^T & \alpha_{k+1} P\emat$.
We have thus proved Claim 0.



By Claim 0 (with $d$ replaced by $d-1$), we can fix a positive definite matrix $Q$ such that $1+d(x^Tx)^{d-1} = \phi(x)_{d-1}^T Q \phi_{d-1}(x)$ for all $x$.
One can check that 
\begin{align*}
y^T \nabla^2 p(x) y &= y^T \bigg( 2I + 2d(x^Tx)^{d-1}I + 4d(d-1)(x^Tx)^{d-2} xx^T\bigg)y \\
&= 2 (y^Ty) (1+d(x^Tx)^{d-1}) + 4d(d-1)(x^Tx)^{d-2}(x^Ty)^2\\
&= 2 (y^Ty) \phi(x)_{d-1}^T Q \phi_{d-1}(x) + 4d(d-1)(x^Tx)^{d-2}(x^Ty)^2\\
&= (\phi_{d-1}(x) \otimes y)^T (Q \otimes 2I + Q') (\phi_{d-1}(x) \otimes y),
\end{align*}
where $Q'$ can be taken to be positive semidefinite since $4d(d-1)(x^Tx)^{d-2}(x^Ty)^2$ is sos.
Since the matrix $Q \otimes 2I + Q'$ is positive definite, it follows that $p$ is in the interior of the cone of sos-convex polynomials of degree at most $2d$.
\end{proof}

\begin{lemma}\label{lem:t bounded}
Suppose $f:\Rn \mapsto \R$ has continuous derivatives up to order $d$ over a compact set $B \subseteq \Rn$.
If $\nabla^2 f(x) \succ 0$ for all $x \in B$, then $t(x)$ (i.e., the optimal value of~\eqref{eq:min t}) is \jeff{uniformly} bounded \jeff{from above} over $B$. 
\end{lemma}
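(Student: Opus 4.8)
The plan is to produce a single threshold $t^\ast>0$, depending only on $f$ and $B$, such that for \emph{every} base point $\xbar\in B$ the surrogate $T_{\xbar,d}(x)+t^\ast\|x-\xbar\|^{d'}$ is sos-convex; since this makes $t^\ast$ feasible for \eqref{eq:min t} written at base point $\xbar$, it gives $t(\xbar)\le t^\ast$ uniformly on $B$. Fixing $\xbar$, I would first pass to the shifted variable $w\defeq x-\xbar$ and discard the constant and linear terms, which do not affect the Hessian and hence are irrelevant to sos-convexity (see \Cref{Defn: sos convex}). Writing $D\defeq d'/2\ge 2$ (note $d'>d\ge 3$ even forces $D\ge 2$), the goal reduces to showing that $\tilde g_{\xbar}(w)+t(w^Tw)^{D}$ is sos-convex for $t$ large, where $\tilde g_{\xbar}(w)=\sum_{i=2}^d\frac{1}{i!}\langle \nabla^i f(\xbar),w^{\boxtimes i}\rangle$ has quadratic part $\tfrac{1}{2}w^T\Hess f(\xbar)\,w$.

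The key device is a $t$-dependent rescaling of the variable. Substituting $w=\beta u$ and dividing by $\beta^2$ (both operations preserve sos-convexity), the target becomes
\[
Q_{\xbar,t}(u)\defeq \tfrac{1}{2}u^T\Hess f(\xbar)\,u+\sum_{i=3}^d\frac{\beta^{\,i-2}}{i!}\langle \nabla^i f(\xbar),u^{\boxtimes i}\rangle+t\beta^{\,2D-2}(u^Tu)^{D}.
\]
Choosing $\beta=t^{-1/(2D-2)}$ normalizes the top-degree coefficient $t\beta^{2D-2}$ to $1$, and since each intermediate term carries a factor $\beta^{\,i-2}$ with $i-2\ge 1$, all terms of degree $3,\dots,d$ tend to $0$ as $t\to\infty$. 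Thus $Q_{\xbar,t}$ converges, in coefficient norm, to the limit polynomial $P_{\xbar}(u)\defeq \tfrac{1}{2}u^T\Hess f(\xbar)\,u+(u^Tu)^{D}$; because $f$ has continuous derivatives up to order $d$ on the compact set $B$, the tensors $\nabla^i f(\xbar)$ are uniformly bounded, so this convergence is uniform over $\xbar\in B$.

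It then remains to show that $P_{\xbar}$ lies in the interior of the sos-convex cone with an interior radius bounded below uniformly over $\xbar\in B$. Here the hypothesis $\Hess f(\xbar)\succ 0$ is essential: by compactness of $B$ there is $\mu>0$ with $\Hess f(\xbar)\succeq\mu I$, so I can split $P_{\xbar}=\big[\tfrac{\mu}{2}u^Tu+(u^Tu)^{D}\big]+\tfrac{1}{2}u^T(\Hess f(\xbar)-\mu I)u$. After a variable rescaling and a positive scaling, the first bracket matches the polynomial of \Cref{lem:sosconvex interior} (with $d$ replaced by $D$) and is therefore interior to the cone, while the second summand is a convex quadratic and hence sos-convex; adding an element of the cone to an interior point keeps it interior. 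Since $\{\Hess f(\xbar):\xbar\in B\}$ is compact, the family $\{P_{\xbar}\}$ is a compact subset of the open interior of the cone and so has a uniform positive distance $\rho_0$ to its boundary. Picking $t$ large enough that the (uniform) deviation $\|Q_{\xbar,t}-P_{\xbar}\|$ stays below $\rho_0$ certifies $Q_{\xbar,t}$, hence the original surrogate, as sos-convex for all $\xbar\in B$, yielding $t(\xbar)\le t^\ast$. I expect the main obstacle to be exactly this uniform-interior step: the pure top-order polynomial $(u^Tu)^{D}$ sits only on the \emph{boundary} of the sos-convex cone (its Hessian vanishes at the origin), so it is the strictly positive quadratic supplied by $\Hess f(\xbar)\succ 0$ that pushes $P_{\xbar}$ into the interior, and one must verify that the interior radius does not degenerate as $\xbar$ ranges over $B$.
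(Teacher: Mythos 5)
Your proposal is correct and follows essentially the same route as the paper: both rely on compactness of $B$ to get a uniform positive-definiteness constant for the Hessian and uniform bounds on the higher derivative tensors, invoke \Cref{lem:sosconvex interior} to provide an interior point of the sos-convex cone of the form ``positive quadratic plus top-degree regularizer,'' and use a variable rescaling to make the intermediate-degree terms small enough to be absorbed into that interior. The only real difference is quantifier order: the paper fixes an explicit rescaling constant $\alpha=\min\{1,R/M\}$ and reads off the explicit bound $t(x)\le\frac{\delta}{2}\alpha^{2-d'}$, whereas your $t$-dependent rescaling $\beta=t^{-1/(2D-2)}$ with a limiting/compactness argument on the family $P_{\xbar}$ yields only a non-explicit threshold $t^\ast$.
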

\begin{proof}

Let $\delta$ be a positive scalar such that $\lambda_{\min} \nabla^2 f(x) \geq \delta$ for all $x \in B$.
Let $x'$ be any vector in $B$, and define
\[
F_{x'}(x) \defn \frac{2}{\delta} T_{x',d}(x'+x).
\]
Since $\nabla^2 f(x') \succeq \delta I$, we have $\nabla^2 F_{x'}(0) \succeq 2I$.
Let $Q_{x'}$ (resp. $C_{x'}$) be the sum of the quadratic and higher (resp. cubic and higher) terms of $F_{x'}$.
For a polynomial $p$, define $||p||_\infty$ as the infinity norm of the coefficients of $p$ when expressed in the standard monomial basis.
By \Cref{lem:sosconvex interior}, we can fix a positive scalar $R$ such that for any polynomial $p$ of degree at most $d$ with $||p||_\infty \leq R$, we have that the polynomial $||x||^2 + ||x||^{d'} + p$ is sos-convex.
Fix a scalar $M$ such that $||C_{x'}||_\infty < M$ for all $x' \in B$.
Define $\alpha \defn \min \{ 1,\frac{R}{M} \}$.
We have $||x \mapsto C_{x'}(\alpha x)||_\infty \leq \alpha^3 ||C_{x'}||_\infty$ since all terms of $C_{x'}$ are of cubic or higher order.
Then we can write
\begin{align*}
\frac{1}{\alpha^2}Q_{x'}(\alpha x) + \|x\|^{d'} &= \frac{1}{2} x^T \nabla^2 F_{x'}(0) x + \frac{1}{\alpha^2}C_{x'}(\alpha x) + \|x\|^{d'} \\
&= \frac{1}{2} x^T (\nabla^2 F_{x'}(0) - 2I) x + \frac{1}{\alpha^2}C_{x'}(\alpha x) + (\|x\|^2 + \|x\|^{d'}).
\end{align*}
Since $\nabla^2 F_{x'}(0) \succeq 2I$, the first term is sos-convex.
We can bound the second term as follows: $\|x \mapsto \frac{1}{\alpha^2}C_{x'}(\alpha x)\|_\infty \leq \alpha ||C_{x'}||_\infty \leq \alpha M \leq R$.
Thus, the sum of the second and the third term is sos-convex by the definition of $R$.
It follows that the polynomial 
\[
\frac{1}{\alpha^2}Q_{x'}(\alpha x) + \|x\|^{d'} \text{ is sos-convex.}
\]
We can then conclude the sos-convexity of the polynomials
\begin{enumerate}[(a)]
    \item $Q_{x'}(\alpha x) + \alpha^2 \|x\|^{d'}$,
    \item $F_{x'}(\alpha x) + \alpha^2 \|x\|^{d'}$,
    \item $F_{x'}(\alpha x) + \alpha^{2-d'} \| \alpha x\|^{d'}$,
    \item $F_{x'}(x) + \alpha^{2-d'} \| x\|^{d'}$,
    \item $T_{x',d}(x'+x) + \frac{\delta}{2} \alpha^{2-d'} \| x\|^{d'}$, and
    \item $T_{x',d}(x) + \frac{\delta}{2} \alpha^{2-d'} \| x- x'\|^{d'}$,
\end{enumerate}
respectively (a) by scaling, (b) by the observation that the affine terms do not affect sos-convexity, (c) by rewriting, (d) by a linear change of coordinates, (e) by another rescaling, and (f) by an affine change of coordinates.
Thus, we have $t(x)  \leq \frac{\delta}{2} \alpha^{2-d'}$ for $x\in B$.

\end{proof}

We next use a quadrature rule for integration to establish a technical lemma that is needed for the remainder of this section. By a polynomial matrix, we mean a matrix whose entries are polynomial functions.



\begin{lemma}\label{lem:matrix quadrature}
Let $M: \R \mapsto \Snn$ be univariate polynomial matrix whose entries have degree at most $d$, where $d$ is even. Suppose $M(s) \succeq 0$ for all $s \in [0,1]$. Then,

\[
\int_0^1 M(s) ds \succeq \frac{1}{2(d^2-1)} M(\alpha)
\]
for $\alpha \in \{0,1\}$.
\end{lemma}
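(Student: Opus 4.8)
The plan is to reduce the matrix inequality to a scalar statement about nonnegative univariate polynomials, and then to estimate a Christoffel-type constant. First I would test the asserted inequality against an arbitrary $v \in \Rn$: writing $g(s) \defeq v^T M(s) v$, the hypothesis $M(s)\succeq 0$ on $[0,1]$ makes $g$ a univariate polynomial of degree at most $d$ that is nonnegative on $[0,1]$, and $v^T\left(\int_0^1 M(s)\,ds\right)v = \int_0^1 g(s)\,ds$ while $v^T M(\alpha)v = g(\alpha)$. So the lemma is equivalent to the scalar inequality $\int_0^1 g(s)\,ds \ge \frac{1}{2(d^2-1)} g(\alpha)$ holding for all such $g$ and every $v$. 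Since the change of variables $s\mapsto 1-s$ preserves both nonnegativity on $[0,1]$ and the value of $\int_0^1 g$ while interchanging the endpoints, it suffices to prove this for $\alpha=0$.

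Next I would exploit the special structure of nonnegative univariate polynomials. By the Markov--Lukács theorem, any $g$ of even degree at most $d$ that is nonnegative on $[0,1]$ can be written as $g(s) = a(s)^2 + s(1-s) b(s)^2$ with $\deg a \le d/2$. Because $s(1-s)b(s)^2 \ge 0$ on $[0,1]$ and vanishes at $s=0$, this immediately gives $\int_0^1 g \ge \int_0^1 a^2$ and $g(0) = a(0)^2$, so the problem reduces to the clean one-variable estimate $\int_0^1 a(s)^2\,ds \ge c\, a(0)^2$ for all polynomials $a$ with $\deg a \le d/2$ and a suitable constant $c$.

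This last estimate is exactly a bound on the Christoffel function of $[0,1]$ at the endpoint $0$, and I would prove it by expanding $a = \sum_{k=0}^{d/2} c_k \tilde P_k$ in the orthonormal shifted Legendre basis $\{\tilde P_k\}$ of $L^2[0,1]$. Then $\int_0^1 a^2 = \sum_k c_k^2$ and $a(0) = \sum_k c_k \tilde P_k(0)$, so Cauchy--Schwarz gives $a(0)^2 \le \left(\int_0^1 a^2\right)\sum_{k=0}^{d/2}\tilde P_k(0)^2$. Using the endpoint value $\tilde P_k(0)^2 = 2k+1$ and $\sum_{k=0}^{d/2}(2k+1) = (d/2+1)^2$, this yields $\int_0^1 a^2 \ge \frac{4}{(d+2)^2} a(0)^2$. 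Combining the reductions gives $\int_0^1 g \ge \frac{4}{(d+2)^2} g(0)$, and since $8(d^2-1) \ge (d+2)^2$ for every even $d\ge 2$, we have $\frac{4}{(d+2)^2} \ge \frac{1}{2(d^2-1)}$, which finishes the proof (in fact with a better constant). The main obstacle is obtaining the correct constant in the scalar step: the structural reductions and the Cauchy--Schwarz bound are routine, but identifying the sharp factor requires computing the Christoffel function $\sum_k \tilde P_k(0)^2$ at the endpoint, equivalently the weight at the fixed node of Gauss--Radau quadrature on $[0,1]$; everything else (the Markov--Lukács representation and the endpoint Legendre values) is standard.
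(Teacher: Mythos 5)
Your proof is correct, but it takes a genuinely different route from the paper. The paper's proof is a two-line application of a quadrature rule: it invokes the Clenshaw--Curtis rule on $[-1,1]$, which is exact for polynomials of degree at most $d$, has nonnegative weights (a nontrivial fact due to Imhof), and assigns the explicit weight $\tfrac{1}{d^2-1}$ to the endpoint node; discarding all other (positive-semidefinite) terms $w_i M(s_i)$ immediately gives the matrix inequality, and the $s\mapsto 1-s$ symmetry handles the other endpoint. You instead scalarize via $g(s)=v^TM(s)v$, certify nonnegativity on $[0,1]$ through the Markov--Luk\'acs representation $g=a^2+s(1-s)b^2$, and then bound $\int_0^1 a^2$ against $a(0)^2$ by a Christoffel-function estimate using orthonormal shifted Legendre polynomials with $\tilde P_k(0)^2=2k+1$. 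Both arguments are sound; yours trades the external input of weight-nonnegativity for Clenshaw--Curtis against the external inputs of Markov--Luk\'acs and endpoint Legendre values, and it yields the sharper constant $\tfrac{4}{(d+2)^2}$, which dominates $\tfrac{1}{2(d^2-1)}$ for all even $d\ge 2$ since $7d^2-4d-12\ge 0$ there. One point worth making explicit if you write this up: the Markov--Luk\'acs representation must be applied to polynomials of degree \emph{at most} $d$ (the quadratic form $v^TM(s)v$ need not have degree exactly $d$), which is fine because the even-degree form of the theorem covers that case, and even a sum-of-squares version $\sigma_0+s(1-s)\sigma_1$ would suffice since your Cauchy--Schwarz bound applies term by term to the squares in $\sigma_0$.
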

\begin{proof}
Using a quadrature rule for integration proposed in~\citep{Clenshaw1960} and analyzed in~\citep{Imhof1963}, 
there exist a set of weights $w_0,\dots,w_d\geq 0$, with $w_{0} = \frac{1}{d^2-1}$, and a set of points $s_0,\dots,s_d \in [-1,1]$, with $s_{0} = 1$, such that for any polynomial $p$ of degree at most $d$ we have
\[
\int_{-1}^{1} p(s) ds = \sum_{i=0}^{d} w_i p(s_i).
\]
Now we can write
\begin{align*}
\int_0^1 M(s) ds &= \frac{1}{2} \int_{-1}^{1} M\left(\frac{1-s}{2}\right)ds \\
&= \frac{1}{2} \sum_{i=0}^{d} w_i M\left(\frac{1-s_i}{2}\right) \\
& \succeq \frac{1}{2} w_0 M\left(\frac{1-s_0}{2}\right)  =  \frac{1}{2(d^2-1)} M(0).
\end{align*}
By replacing $s$ with $1-s$, the claim with $\alpha=1$ follows.
\end{proof}

The next lemma directly proves the second claim of Theorem~\ref{thm:well defined} and is possibly of independent interest.

\begin{lemma}\label{lem: unique min}
If a convex polynomial $p:\Rn \mapsto \R$ satisfies $\nabla^2 p(x_0) \succ 0$ for any point $x_0 \in \Rn$, then $p$ has a unique minimizer.
\end{lemma}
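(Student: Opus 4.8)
The plan is to show that the hypotheses force $p$ to be \emph{coercive} (i.e., $p(x) \to \infty$ as $\|x\| \to \infty$); existence and uniqueness of the minimizer then both follow easily. The positive definiteness of the Hessian at the single point $x_0$ enters only through the following observation: for every direction $v \neq 0$, the univariate restriction $g_v(t) \defeq p(x_0 + tv)$ is a convex polynomial with $g_v''(0) = v^T \nabla^2 p(x_0) v > 0$. Hence $g_v$ is not affine, so it has even degree at least $2$ with positive leading coefficient, and therefore $g_v(t) \to +\infty$ as $t \to \pm\infty$.

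First I would upgrade this growth along every line through $x_0$ to genuine coercivity of $p$. By convexity the difference quotient $[p(x_0+tv) - p(x_0)]/t$ is nondecreasing in $t$, so the recession function $p^\infty(v) \defeq \lim_{t\to\infty}[p(x_0+tv)-p(x_0)]/t$ exists in $(0,+\infty]$ and is strictly positive for every $v \neq 0$ (the limit is positive because $p(x_0+tv)\to+\infty$ forces the quotient to be positive for large $t$). It is a standard fact from the theory of recession cones of convex functions that the recession cone common to all nonempty sublevel sets $\{x : p(x) \le \beta\}$ equals $\{v : p^\infty(v) \le 0\}$, which here is $\{0\}$; hence every sublevel set of $p$ is bounded. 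A continuous function with bounded sublevel sets is coercive and attains its infimum, which establishes the existence of a minimizer (one could alternatively invoke the attainment result for bounded-below convex polynomials used earlier, \cite{Belousov2002}).

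For uniqueness, suppose $p$ had two distinct minimizers $x_1 \neq x_2$. By convexity $p$ equals its minimum value on the entire segment $[x_1,x_2]$, so the univariate polynomial $t \mapsto p(x_1 + t(x_2-x_1))$ is constant on $[0,1]$. A univariate polynomial that is constant on an interval is constant everywhere; hence $p$ is constant along the whole line through $x_1$ and $x_2$. This contradicts coercivity, which forces $p \to \infty$ along this line. Therefore the minimizer is unique.

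The main obstacle is the coercivity upgrade in the second paragraph: growth along every line through a point does \emph{not} imply coercivity for general functions, and the argument genuinely relies on convexity, through the monotonicity of the difference quotients and the recession-cone characterization of bounded sublevel sets. Everything else — the reduction to univariate restrictions and the polynomial rigidity (constant on an interval implies constant on the line) used for uniqueness — is elementary, so the write-up should be short once the recession-function step is stated carefully.
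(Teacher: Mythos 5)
Your proof is correct, and your uniqueness argument is the same as the paper's (constant on a segment, hence constant on the line, contradicting coercivity); where you genuinely diverge is in how you establish coercivity. The paper writes $p(x)-p(x_0)-(x-x_0)^T\nabla p(x_0)$ as a double integral of $\nabla^2 p$ along the segment and applies \Cref{lem:matrix quadrature} to get an explicit coercive quadratic minorant of $p$ whose Hessian is a positive multiple of $\nabla^2 p(x_0)$; this is quantitative and reuses a quadrature lemma the paper needs anyway in the proof of \Cref{thm:rate}. You instead restrict $p$ to each line through $x_0$, note that the restriction is a nonaffine convex univariate polynomial and hence of even degree with positive leading coefficient (so it blows up in both directions), and then upgrade this line-wise growth to coercivity via the recession function $p^\infty$ and the standard characterization of bounded sublevel sets by a trivial recession cone. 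Your write-up correctly isolates the two places where the polynomial structure is indispensable: a general convex $g$ with $g''(0)>0$ need not blow up (e.g., $g(t)=\sqrt{1+t^2}-t$), so the even-degree observation is what forces $p^\infty(v)>0$; and the rigidity of polynomials is what turns ``constant on a segment'' into ``constant on a line.'' The trade-off is that the paper's route yields an explicit quadratic lower bound (potentially useful for constants elsewhere), while yours avoids the quadrature machinery entirely at the cost of importing the recession-cone facts from convex analysis. Both arguments are complete.
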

\begin{proof}
Without loss of generality, assume $x_0 = 0$.
Let $d$ be an even integer greater than the degree of the Hessian of $p$.
For any $x \in \Rn$, 
\begin{align*}
p(x) &= p(0) + x^T\nabla p(0) + x^T \left( \int_0^1 \int_0^t \nabla^2 p(sx) ds dt \right) x \\
&= p(0) + x^T\nabla p(0) + x^T \left( \int_0^1 t\int_0^1 \nabla^2 p(stx) ds dt \right) x \\
&\geq p(0) + x^T\nabla p(0) + x^T \left( \int_0^1 t\left( \frac{1}{2(d^2-1)} \nabla^2 p(0) \right) dt \right) x \\
&= p(0) + x^T\nabla p(0) + \frac{1}{4} x^T \left( \frac{1}{d^2-1} \nabla^2 p(0) \right)  x,
\end{align*}
where the inequality follows from \Cref{lem:matrix quadrature}.
Thus, $p$ is lower bounded by a coercive\footnote{We recall that a function $g:\Rn \mapsto \R$ is coercive if $g(x)\rightarrow\infty$ as $||x||\rightarrow\infty$.} quadratic function, and hence $p$ is coercive itself. A coercive function that is convex (and hence continuous) has at least one minimizer.

Suppose for the sake of contradiction that $p$ had two minimizers $\bar{x}, \bar{y}$. Then, by convexity, any point on the line segment connecting $\bar{x}$ and $\bar{y}$ would also be a minimizer. Since $p$ is a polynomial, it follows that $p$ must be constant along the line passing through $\bar{x}$ and $\bar{y}$. This contradicts coercivity.  
\end{proof}


We remark that the statement of \Cref{lem: unique min} does not hold for non-polynomial convex functions (consider, e.g., the univariate function $\max\{0,x^2 -1\}$).

The next lemma is used in the proof of Theorem~\ref{thm:rate}.


\begin{lemma}\label{lem:psi at optimum}
There exists a constant $r>0$ such that if $||x_k-x^*||\leq r,$ then $\lambda_{\min}\nabla^2 \psi_{x_k,d} (x^*) \geq \frac{1}{2}\lambda_{\min}\nabla^2f(x^*)$.
\end{lemma}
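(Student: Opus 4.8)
The plan is to compare $\nabla^2 \psi_{x_k,d}(x^*)$ directly with $\nabla^2 f(x^*)$ and to show that the gap in minimum eigenvalue vanishes as $x_k \to x^*$. First I would choose $r$ small enough that $\nabla^2 f(x_k) \succ 0$ whenever $\|x_k - x^*\| \leq r$; this is possible because $\nabla^2 f$ is continuous (as $d \geq 3$) and $\nabla^2 f(x^*) \succ 0$. On this ball the problem \eqref{eq:min t} is feasible and its optimal value $t(x_k)$ is a finite, nonnegative scalar (feasibility and boundedness from \Cref{lem:t bounded}, nonnegativity from the constraint $t \geq 0$), so $\psi_{x_k,d}$ is genuinely the surrogate produced by the algorithm and the formula \eqref{eq:psi} applies.

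Next I would differentiate \eqref{eq:psi} twice and evaluate at $x^*$ to obtain
\[
\nabla^2 \psi_{x_k,d}(x^*) = \nabla^2 T_{x_k,d}(x^*) + t(x_k)\, \nabla^2\big(\|x-x_k\|^{d'}\big)\big|_{x=x^*}.
\]
Since $x \mapsto \|x-x_k\|^{d'}$ is convex (it is the composition of the convex nondecreasing map $s \mapsto s^{d'}$ with a norm, using $d' \geq 2$), its Hessian is positive semidefinite, and $t(x_k) \geq 0$; hence the second term is positive semidefinite and may simply be dropped when bounding the minimum eigenvalue from below. Using $R_{x_k,d} = f - T_{x_k,d}$, I would substitute $\nabla^2 T_{x_k,d}(x^*) = \nabla^2 f(x^*) - \nabla^2 R_{x_k,d}(x^*)$ and invoke the eigenvalue perturbation bound $\lambda_{\min}(A+E) \geq \lambda_{\min}(A) - \|E\|$ (Weyl), giving
\[
\lambda_{\min}\nabla^2 \psi_{x_k,d}(x^*) \ \geq\ \lambda_{\min}\nabla^2 f(x^*) - \big\|\nabla^2 R_{x_k,d}(x^*)\big\|.
\]

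The remaining step is to control $\|\nabla^2 R_{x_k,d}(x^*)\|$ through \Cref{Lem: Hessian Remainder}, applied with base point $\bar{x} = x_k$ and evaluation point $x^*$. The one subtlety I would be careful about is that the lemma requires $\nabla^d f$ to be Lipschitz on a convex set containing both $x_k$ and $x^*$; I would take that set to be the ball $\{z : \|z - x^*\| \leq r_L\}$ from the standing assumptions, which is convex and contains $x_k$ once $r \leq r_L$. This yields $\|\nabla^2 R_{x_k,d}(x^*)\| \leq \frac{L}{(d-1)!}\|x_k - x^*\|^{d-1}$, which tends to $0$ as $x_k \to x^*$ since $d \geq 3$ forces the exponent $d-1 \geq 2$ to be positive. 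Finally I would shrink $r$ (if needed) so that $\frac{L}{(d-1)!} r^{d-1} \leq \frac{1}{2}\lambda_{\min}\nabla^2 f(x^*)$, i.e. take
\[
r = \min\left\{\, r_L,\ \Big(\tfrac{(d-1)!\,\lambda_{\min}\nabla^2 f(x^*)}{2L}\Big)^{1/(d-1)} \,\right\}
\]
(intersected with the radius making $\nabla^2 f(x_k) \succ 0$ above), which delivers the claim. Overall this is a short perturbation argument, and the only genuinely non-mechanical observations are that the regularization term $t(x_k)\|x-x_k\|^{d'}$ \emph{helps} rather than hurts because it is positive semidefinite, and that the remainder Hessian decays like $\|x_k - x^*\|^{d-1}$ — so that no control on $t(x_k)$ beyond its nonnegativity is actually required.
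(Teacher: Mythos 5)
Your proposal is correct and follows essentially the same route as the paper: bound $\|\nabla^2 f(x^*) - \nabla^2 T_{x_k,d}(x^*)\| = \|\nabla^2 R_{x_k,d}(x^*)\|$ via \Cref{Lem: Hessian Remainder}, choose $r = \min\{r_L, ((d-1)!\,\lambda_{\min}\nabla^2 f(x^*)/(2L))^{1/(d-1)}\}$ so this perturbation is at most $\tfrac{1}{2}\lambda_{\min}\nabla^2 f(x^*)$, and then discard the regularization term $t(x_k)\|x-x_k\|^{d'}$ because its Hessian is positive semidefinite. Your additional care about ensuring $\nabla^2 f(x_k) \succ 0$ (so that the algorithm is indeed in the $\psi_{x_k,d}$ branch) and your explicit invocation of Weyl's inequality are fine refinements of the same argument.
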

\begin{proof}
We show that we can take $$r = \min\left \{ r_L,\left(\frac{(d-1)! \lambda_{\min}\nabla^2f(x^*)}{2L}\right)^{\frac{1}{d-1}}\right \}$$ where $r_L$ and $L$ are as in the first paragraph of \Cref{Sec: Algorithm}.
By \Cref{Lem: Hessian Remainder}, For every $x$ satisfying $\|x-x_k\|\ \le r_L$, we have 
$$\|\nabla^2 f(x) - \nabla^2 T_{x_k,d}(x)\| \le \frac{L}{(d-1)!}\|x-x_k\|^{d-1}.$$
Thus, if $\|x^*-x_k\| \le r$, we have $$\|\nabla^2 f(x^*) - \nabla^2 T_{x_k,d}(x^*)\| \le \frac{1}{2}\lambda_{\min}\nabla^2f(x^*).$$

It follows that
\[
\lambda_{\min} \nabla^2 T_{x_k,d}(x^*) \geq \frac{1}{2}\lambda_{\min}\nabla^2f(x^*).
\]
Indeed, if there was a unit vector $y$ such that if $y^T\nabla^2 T_{x_k,d}(x^*) y < \frac{1}{2}\lambda_{\min}\nabla^2f(x^*)$, the previous inequality would be violated.

Recall from \eqref{eq:psi} that $\psi_{x_k,d}$ is obtained by adding to $T_{x_k,d}$ the convex function $t(x_k)\|x-x_k\|^{d'}$.
Therefore, we have $\nabla^2 \psi_{x_k,d}(x^*) \succeq \nabla^2 T_{x_k,d}(x^*),$ which gives the claim.




\end{proof}



We now have all the ingredients to prove Theorems~\ref{thm:well defined} and~\ref{thm:rate}.

\begin{proof}[Proof of \Cref{thm:well defined}]
\ \\
(i) When $\nabla^2 f(x_k) \succ 0$, the proof of \Cref{lem:t bounded} with $B = \{ x_k\}$ demonstrates a feasible solution to \eqref{eq:min t}.
This argument also extends to show feasibility of \eqref{eq:min t bar} since the polynomial $T_{x_k,d}(x) + \frac{1}{2}\big(\varepsilon-\lambda_{\min}\nabla^2 f(x_k)\big) ||x-x_k||^2$ has a positive definite Hessian at $x_k$.
\ \\
(ii) At \Cref{line: min psi} (resp. \Cref{line: min psi bar}), $\psi_{x_k,d}$ (resp. $\bar{\psi}_{x_k,d}$) has a positive definite Hessian at $x_k$.
Moreover, the polynomial $\psi_{x_k,d}$ (resp. $\bar{\psi}_{x_k,d}$) is sos-convex and therefore convex.
Thus, by \Cref{lem: unique min}, $\psi_{x_k,d}$ (resp. $\bar{\psi}_{x_k,d}$) has a unique minimizer.

\end{proof}

\begin{proof}[Proof of \Cref{thm:rate}]
Since $d > 1$, it suffices to show that there exist constants $r',c'>0$ such that if $||x_0-x^*||\leq r'$, then $||x_1 - x^*|| \leq c' ||x_0 - x^*||^d$.

By continuity of the map $x \mapsto \lambda_{\min}\nabla^2 f(x)$, there exists a scalar $r_1 > 0$ such that $\lambda_{\min} \nabla^2 f(x) \geq \frac{1}{2} \lambda_{\min} \nabla^2 f(x^*) > 0$ for all $x$ with $||x-x^*|| \leq r_1$.

Let $r_2 > 0$ be the constant needed for the conclusion of \Cref{lem:psi at optimum} to hold.
Define \[r' \defn \min \{r_L,r_1,r_2 \}\] and $\Omega \defn \{x \in \Rn \mid ||x-x^*|| \leq r'\}$.
Suppose $x_0 \in \Omega$.
Note that in this case, \Cref{alg:main} finds the next iterate $x_1$ by minimizing the polynomial $\psi_{x_0,d}$ defined in $\eqref{eq:psi}$.

By the fundamental theorem of calculus, we have
\[
\nabla \psi_{x_0,d}(x^*) - \nabla \psi_{x_0,d}(x_1) = \left( \int_0^1 \nabla^2 \psi_{x_0,d}(x_1 + s(x^*-x_1)) ds \right) (x^*-x_1) .
\]
Since $x_1$ minimizes $\psi_{x_0,d}$, we have $\nabla \psi_{x_0,d}(x_1) = 0$, and thus
\[
\nabla \psi_{x_0,d}(x^*) = \left( \int_0^1 \nabla^2 \psi_{x_0,d}(x_1 + s(x^*-x_1)) ds \right) (x^*-x_1) .
\]
We can bound the norm of this vector from below:
\begin{equation}\label{eq:grad ftoc}
||\nabla \psi_{x_0,d}(x^*)|| \geq \lambda_{\min} \left( \int_0^1 \nabla^2 \psi_{x_0,d}(x_1 + s(x^*-x_1)) ds \right) ||x^*-x_1||.
\end{equation}
Applying first \Cref{lem:matrix quadrature} and then \Cref{lem:psi at optimum}, we have
\begin{align*}
\lambda_{\min} \left( \int_0^1 \nabla^2 \psi_{x_0,d}(x_1 + s(x^*-x_1)) ds \right) &\geq \frac{\lambda_{\min}\nabla^2 \psi_{x_0,d}(x^*)}{2((d'-2)^2-1)} \\
&\geq \frac{\lambda_{\min}\nabla^2f(x^*)}{4((d'-2)^2-1)}.
\end{align*}
Substituting this into \eqref{eq:grad ftoc} and rearranging yields
\begin{equation}\label{eq:grad psi}
\|x_1 - x^*\| \leq \frac{4((d'-2)^2-1)}{\lambda_{\min}\nabla^2f(x^*)} \|\nabla \psi_{x_0,d} (x^*)\|.
\end{equation}
Expanding $\nabla \psi_{x_0,d}(x^*)$, we have
\begin{align*}
\|\nabla \psi_{x_0,d}(x^*)\| &= \left\|\nabla T_{x_0,d}(x^*) + \nabla ( t(x_0) ||x-x_0||^{d'}) \bigg|_{x^*} \right\| \\
& = \left\|\nabla T_{x_0,d}(x^*) + t(x_0) d' ||x^*-x_0||^{d'-2} (x^*-x_0) \right\| \\
&\leq ||\nabla T_{x_0,d}(x^*)|| + t(x_0) d' ||x^*-x_0||^{d'-1}.
\end{align*}
Applying \Cref{Lem: Gradient remainder} and noting that $\nabla f(x^*) = 0$, we have
\[
||\nabla \psi_{x_0,d}(x^*)|| \leq \frac{L}{d!} ||x^*-x_0||^d + t(x_0) d' ||x^*-x_0||^{d'-1}.
\]
Using \Cref{lem:t bounded} and the fact that $||x^*-x_0|| \leq r'$, we get
\begin{align*}
||\nabla \psi_{x_0,d}(x^*)|| &\leq \frac{L}{d!} ||x^*-x_0||^d + (\sup_{x \in \Omega} t(x)) d' \max\{r',1\} ||x^*-x_0||^d \\
&= \left(\frac{L}{d!}+ (\sup_{x \in \Omega} t(x)) d'\max\{r',1\} \right) ||x^*-x_0||^d.
\end{align*}
Substituting into \eqref{eq:grad psi}, we have
\[
||x_1 - x^*|| \leq \left( \frac{4((d'-2)^2-1)}{\lambda_{\min}\nabla^2f(x^*)} \left(\frac{L}{d!}+(\sup_{x \in \Omega} t(x)) d'\max\{r',1\}\right) \right) ||x^*-x_0||^d
\]
as desired.
\end{proof}

\section{Numerical Examples}\label{Sec: Numerical examples}
We present three examples to compare the performance of our $d$\textsuperscript{th}-order Newton methods and the classical Newton method.
\subsection{The Univariate Case}\label{SSec: Univariate examples}

In the univariate case, the iterations of the classical Newton method read $$x_{k+1}=x_k-\frac{f^\prime(x_k)}{f^{\prime\prime}(x_k)}.$$ 
In terms of finding a root of $f'$, this iteration can be interpreted as first computing the first-order Taylor expansion of $f'$ at $x_k$, and then finding the root of this affine function to define $x_{k+1}$.

We derive a similar explicit formula for our higher-order Newton method in the case where $n=1$, $d=3$, and $f''$ is positive. Since convex univariate polynomials are sos-convex, finding explicit solutions to the two SDPs involved in each iteration of our algorithm reduces to arguments about roots of univariate polynomials.

\begin{proposition}\label{prop:third formula}
In the univariate case, when $f''(x_k) > 0$ and $f'''(x_k) \neq 0$, the next iterate of the $3$\textsuperscript{rd}-order version of Algorithm~\ref{alg:main} is given by\footnote{Note that when $f''(x_k) > 0$ and $f'''(x_k) = 0$, the third-order Taylor series is convex and coincides with the second-order Taylor series. Therefore, the next iterates of the third-order and the classical Newton method coincide.}
\[
x_{k+1} = x_k - 2\frac{f''(x_k)}{f'''(x_k)} - \sqrt[3]{\frac{f'(x_k) - \frac{2}{3} \frac{(f''(x_k))^2}{f'''(x_k)}}{\frac{(f'''(x_k))^2}{12f''(x_k)}}}.
\]
\end{proposition}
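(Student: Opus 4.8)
The plan is to exploit the footnote's observation that in one variable convexity and sos-convexity coincide, so that the two semidefinite programs collapse into elementary statements about univariate polynomials. Writing $u = x - x_k$ and abbreviating $a = f'(x_k)$, $b = f''(x_k) > 0$, and $c = f'''(x_k) \neq 0$, the third-order Taylor expansion is $T_{x_k,3}(x) = f(x_k) + au + \tfrac{b}{2}u^2 + \tfrac{c}{6}u^3$, and since here $d' = 4$ the surrogate is the quartic $g(u) = T_{x_k,3}(x) + t\,u^4$. The whole argument is then a matter of computing $t(x_k)$ and then minimizing $g$.

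First I would determine $t(x_k)$, the optimal value of \eqref{eq:min t}. Because a univariate polynomial is sos-convex exactly when it is convex, the sos-convexity constraint is equivalent to $g''(u) = b + cu + 12t\,u^2 \geq 0$ for all $u \in \R$. For $t > 0$ this is a quadratic with positive leading coefficient, so it is nonnegative if and only if its discriminant is nonpositive, i.e.\ $c^2 - 48tb \leq 0$; hence the smallest feasible value is $t(x_k) = c^2/(48b)$. Since $c \neq 0$ and $b > 0$, this value is strictly positive, so the constraint $t \geq 0$ is inactive and does not bind.

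The key algebraic simplification is that this optimal choice of $t$ makes $g''$ a perfect square: substituting $t = c^2/(48b)$ gives $g''(u) = \tfrac{c^2}{4b}\bigl(u + \tfrac{2b}{c}\bigr)^2$, whose double root is at $u = -2b/c$. Integrating once and fixing the constant of integration via $g'(0) = a$ yields $g'(u) = \tfrac{c^2}{12b}\bigl(u + \tfrac{2b}{c}\bigr)^3 + \bigl(a - \tfrac{2b^2}{3c}\bigr)$. Since $g$ is convex with $g''(0) = b > 0$, \Cref{lem: unique min} guarantees that its unique minimizer is its unique stationary point, and because $g'$ is strictly increasing this stationary point is found by extracting the unique real cube root of $g'(u) = 0$. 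Solving gives $u = -\tfrac{2b}{c} - \sqrt[3]{(a - \tfrac{2b^2}{3c})/(c^2/(12b))}$; substituting back $u = x_{k+1} - x_k$ together with the values of $a,b,c$ produces exactly the stated formula.

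The computation is essentially routine once the right viewpoint is adopted; the only genuine moment of insight is recognizing that the SDP-optimal regularization weight $t(x_k)$ is precisely the one that forces a \emph{double} root in $g''$, collapsing the cubic equation $g'=0$ into a single extractable cube root rather than a generic (and thus unwieldy) Cardano expression. In writing the proof I would be careful with two checks: that the $t \geq 0$ constraint is never the binding one, which is guaranteed by $b > 0$ and $c \neq 0$; and that the real cube root genuinely selects the global minimizer rather than a spurious critical point, which follows from the strict monotonicity of $g'$ established above.
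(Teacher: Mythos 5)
Your proposal is correct and follows essentially the same route as the paper: translate to $x_k=0$, find $t(x_k)=\frac{(f'''(x_k))^2}{48 f''(x_k)}$ from the discriminant of the quadratic $\psi''$, observe that this choice makes $\psi'$ a shifted perfect cube plus a constant, and extract the unique real cube root. Your added checks (that $t\geq 0$ is not binding and that the stationary point is the unique minimizer via strict monotonicity of $g'$) are consistent with, and slightly more explicit than, the paper's argument.
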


\begin{proof}
To simplify notation, we let $T \defeq T_{x_k,3}$ and $\psi \defeq \psi_{x_k,3}$.
By translation, we may assume $x_k = 0$.
Then $T(x) = f(x_k) + xf'(x_k) + \frac{1}{2}x^2f''(x_k) + \frac{1}{6} x^3 f'''(x_k)$,
$\psi (x) = T(x) + tx^4$, where $t$ is the smallest constant that makes $\psi$ convex.
We have $\psi''(x) = f''(x_k) + xf'''(x_k) + 12tx^2$.
The discriminant of $\psi''$ is $(f'''(x_k))^2 - 48tf''(x_k)$, which tells us that $t = \frac{(f'''(x_k))^2}{48f''(x_k)}$.

To find $x_{k+1}$, we look for the root of $\psi'$.
One can write the expression for $\psi'$ in the following form:
\[
\psi'(x) = \frac{(f'''(x_k))^2}{12f''(x_k)}\left(x+2\frac{f''(x_k)}{f'''(x_k)}\right)^3 + f'(x_k) - \frac{2}{3}\frac{(f''(x_k))^2}{f'''(x_k)}.
\]
Observe that a univariate cubic polynomial of the form $a(x-b)^3 + c$, with $a \neq 0$, has a unique root at $x = b - \sqrt[3]{\frac{c}{a}}$.
Therefore, after a translation back by $x_k$, we have \[x_{k+1} = x_k-2\frac{f''(x_k)}{f'''(x_k)} - \sqrt[3]{\frac{f'(x_k) - \frac{2}{3} \frac{(f''(x_k))^2}{f'''(x_k)}}{\frac{(f'''(x_k))^2}{12f''(x_k)}}}.\]
\end{proof}

As in the case of the classical Newton method, the expression in \Cref{prop:third formula} can be interpreted geometrically in terms of finding a root of $f'$.
This iteration computes the second-order Taylor expansion of $f'$ at $x_k$, adds a sufficiently large cubic term to enforce monotonicity, and then finds the root of this monotone cubic function to define $x_{k+1}$.


\paragraph{Example 1}
In this example, we apply our method to the univariate function
\beq \label{Eq; square root function} f(x)=\sqrt{x^2+1}-1.\eeq
This is a strictly convex function with its unique minimizer at $x^*=0$.
One can check that the classical Newton method converges to this minimizer if and only if $|x_0| < 1$.
Using \Cref{prop:third formula}, we can calculate the exact basin of convergence of our third-order Newton method to be $(-\beta, \beta)$, where
\[\beta = \sqrt{\frac{1}{3}\left( 11 + \frac{142}{\sqrt[3]{1691 + 9i\sqrt{47}}} + \sqrt[3]{1691 + 9i\sqrt{47}} \right)} \sim 3.407.\]
This is strictly larger than the basin of convergence of the classical method.

\Cref{fig: Newton Comparison} demonstrates the difference between one iteration of the classical and our third-order Newton method starting at the point $x_0=1.5$.
We display the quadratic and quartic polynomials $T_{x_0,2}$ and $\psi_{x_0,3}$.
The minimizers of these polynomials are denoted by $x_1^{\textrm{Newton}}$ and $x_1^{\textrm{3ON}}$, which are respectively the next iterates of the classical and our third-order Newton method.
Since the third-order Taylor expansion of $f$ provides a more accurate approximation, we see that the next iterate of our method is closer to $x^*$, while that of the classical Newton method moves farther away from $x^*$.

\begin{figure}[H]
    \centering
    \includegraphics[width=.6\textwidth]{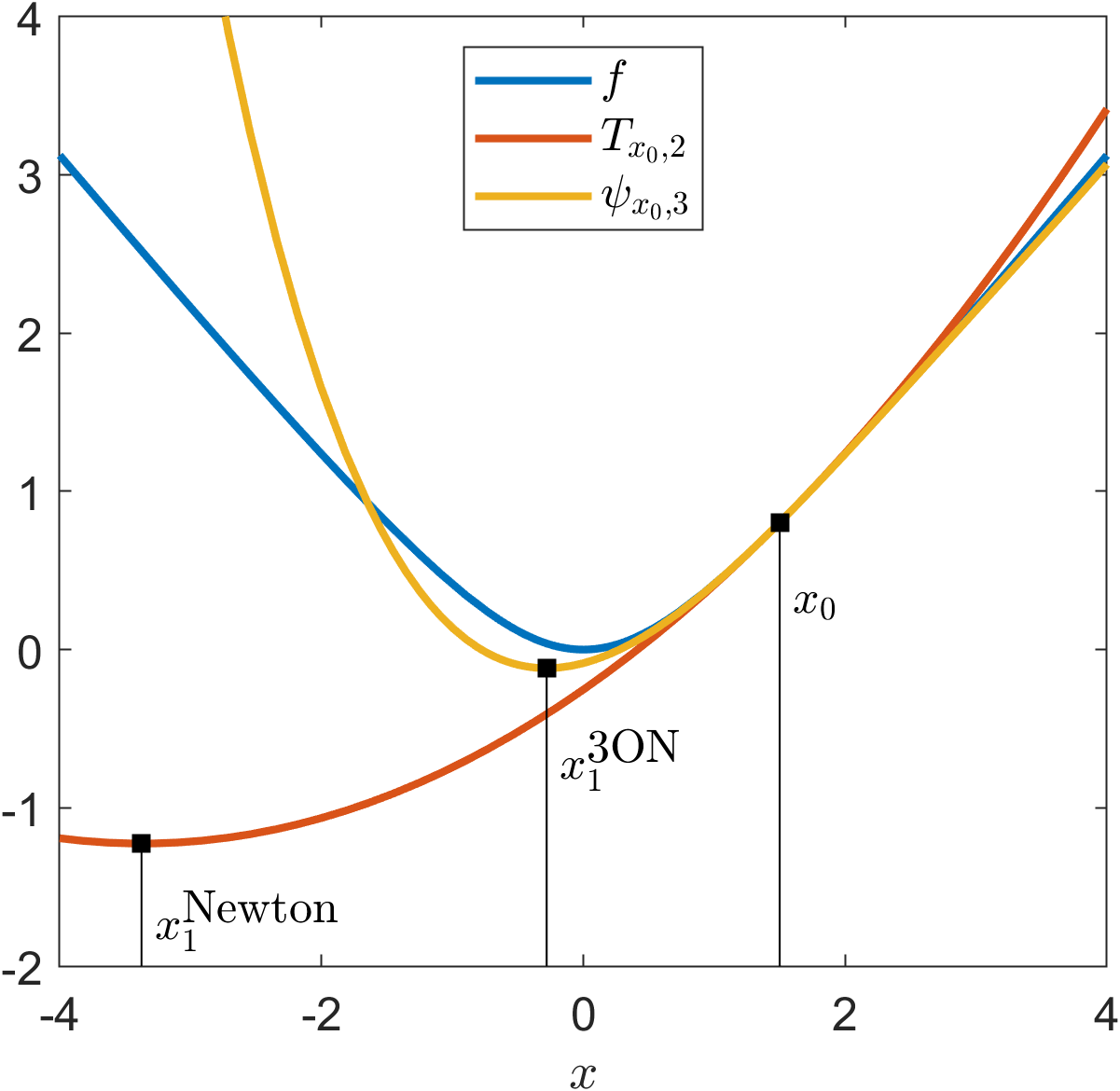}
    \caption{A comparison of one iteration of the classical Newton method and our third-order Newton method applied to the function in \eqref{Eq; square root function} starting at $x_0= 1.5$.}
    \label{fig: Newton Comparison}
\end{figure}

For our $d$\textsuperscript{th}-order Newton methods with $d>3$, we calculate the radii of convergence numerically.
These radii increase with degree as the following table demonstrates:
\begin{center}
\begin{tabular}{||c c||} 
 \hline
 Degree $d$ & Radius of Convergence \\ [0.5ex] 
 \hline\hline
 2 (Classical Newton) & 1 \\ 
 \hline
 3 & $\sim$3.4 \\
 \hline
 4 & $\sim$4.5 \\
 \hline
 5 & $\sim$5.9 \\
 \hline
\end{tabular}
\end{center}

We can visualize the speed of convergence of the fifth-order method, for example, in \Cref{fig: Newton Speed}.
In this figure, we plot the absolute value of $|x_k-x^*|$ starting at $x_0 = 5.9$, which is close to the boundary of the basin.
In just five iterations, the method reaches a point with absolute value approximately $10^{-15}$.

\begin{figure}[H]
    \centering
    \includegraphics[width=.57\textwidth]{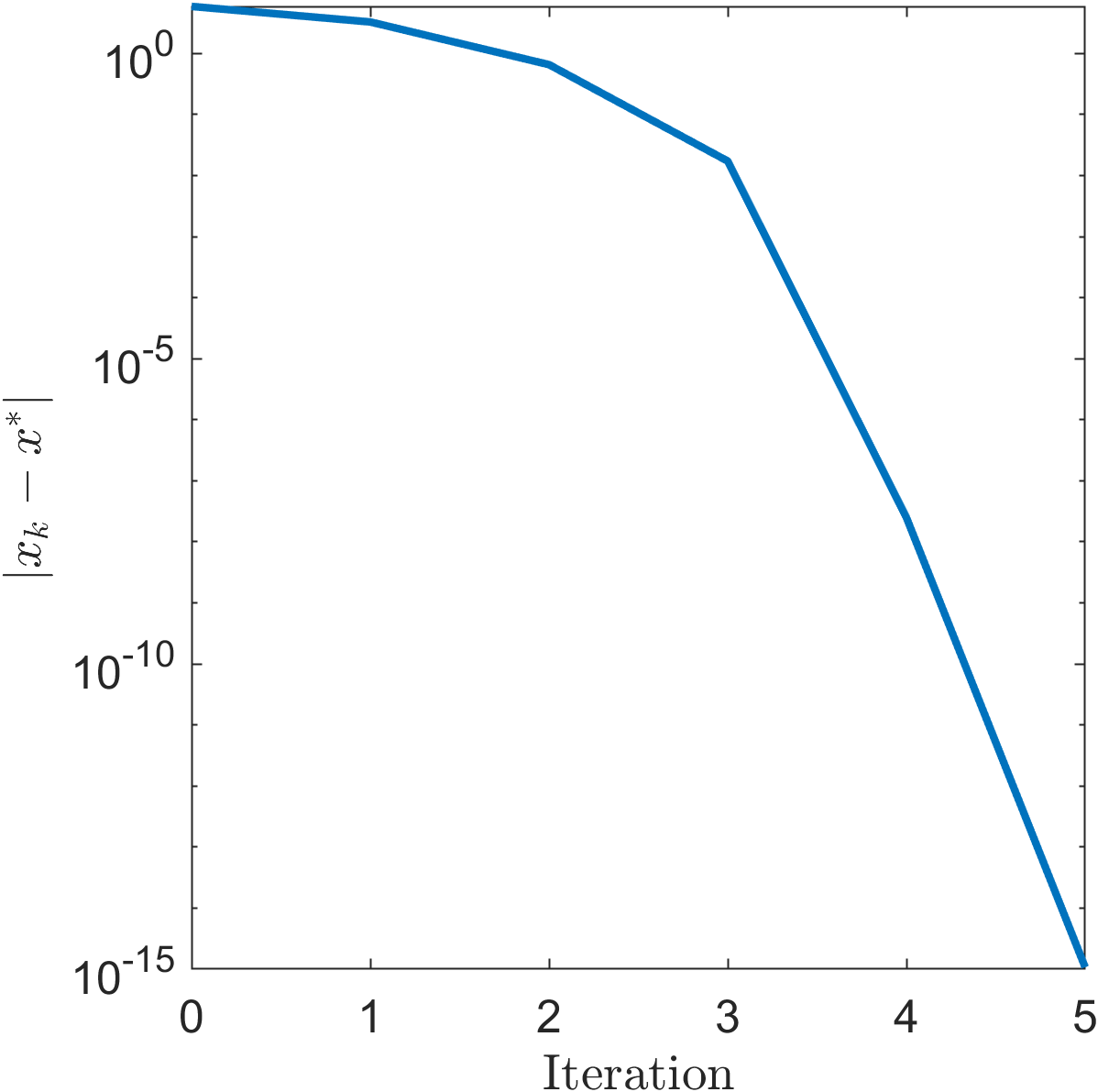}
    \caption{5\textsuperscript{th}-order Newton iterates applied to the function in \eqref{Eq; square root function}.}
    \label{fig: Newton Speed}
\end{figure}
\paragraph{Example 2}\label{Ex: 363 function}
In this example, we compare our third-order method to the classical Newton method when applied to the function
\beq \label{Eq: arctan function}
f(x) = 2x \arctan(x)-\log(1+x^{2})+\frac{1}{10}x^{2}.
\eeq
This is a strongly convex function with its unique minimizer at $x^*=0$.

In \Cref{fig: arctan}, $N_2$ (resp. $N_3$) is the map that takes a point to the corresponding next iterate of the classical (resp. third-order) Newton method.
In this example, the third-order method satisfies $|N_3(x)| < |x|$ for all nonzero $x$, implying global convergence of the method.
Meanwhile, the classical Newton method oscillates between $\pm 13.494$ when $x_0$ is outside of the range $[ -\alpha, \alpha]$, where $\alpha \sim 1.712$ is point of intersection of the functions $N_2$ and $-x$.

In \Cref{fig: Newton Arctan Comparison}, we can see a comparison of the iterates of the third-order and the classical Newton method starting from the initial condition $x_0=1.7$.
While both methods converge to the minimizer, the third-order method converges much faster.

\begin{figure}[H]
     \centering
     \begin{subfigure}[b]{0.49\textwidth}
         \centering
         \includegraphics[width=\textwidth]{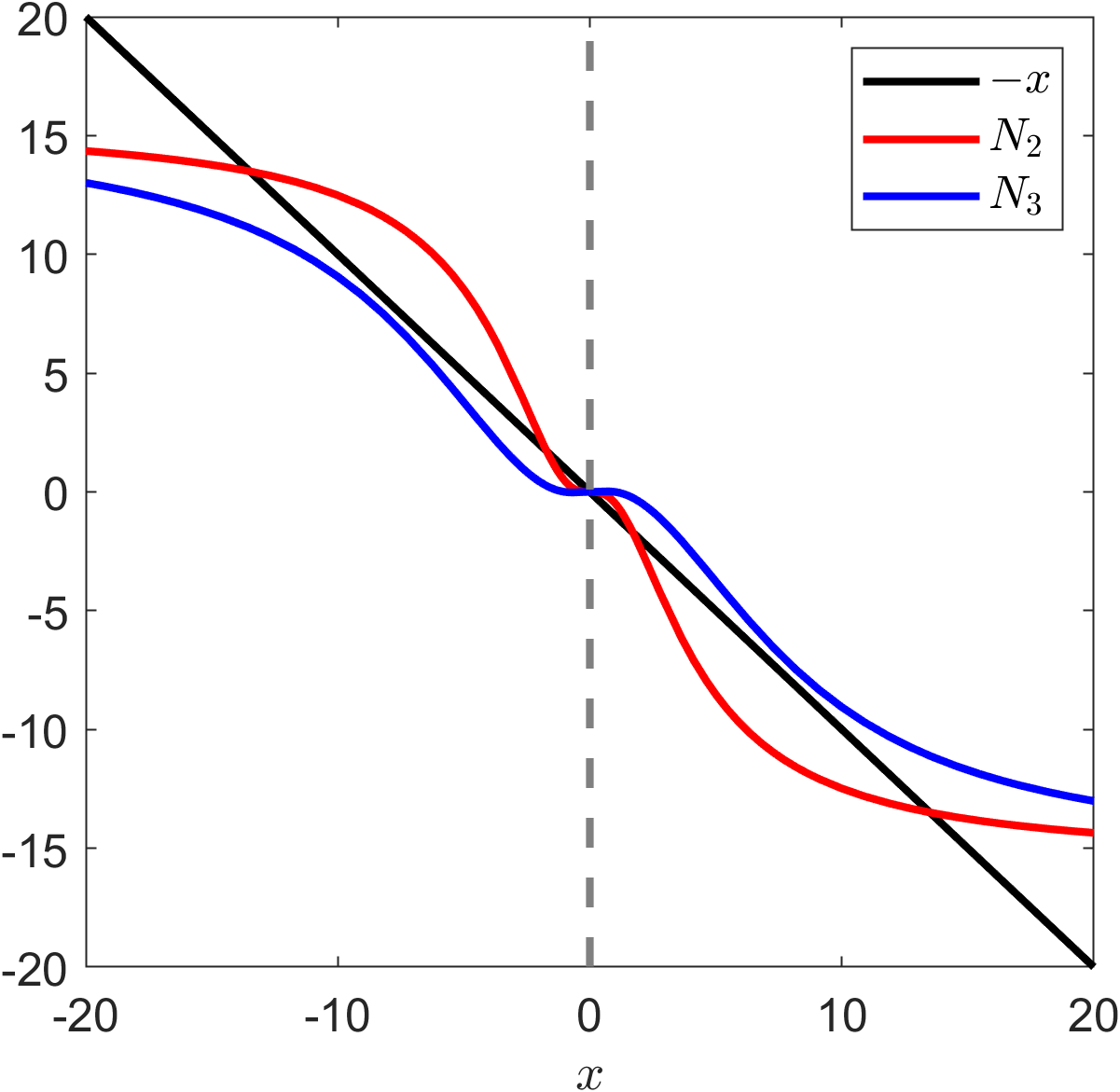}
         \caption{}
     \end{subfigure}
     \hfill
     \begin{subfigure}[b]{0.49\textwidth}
         \centering
         \includegraphics[width=\textwidth]{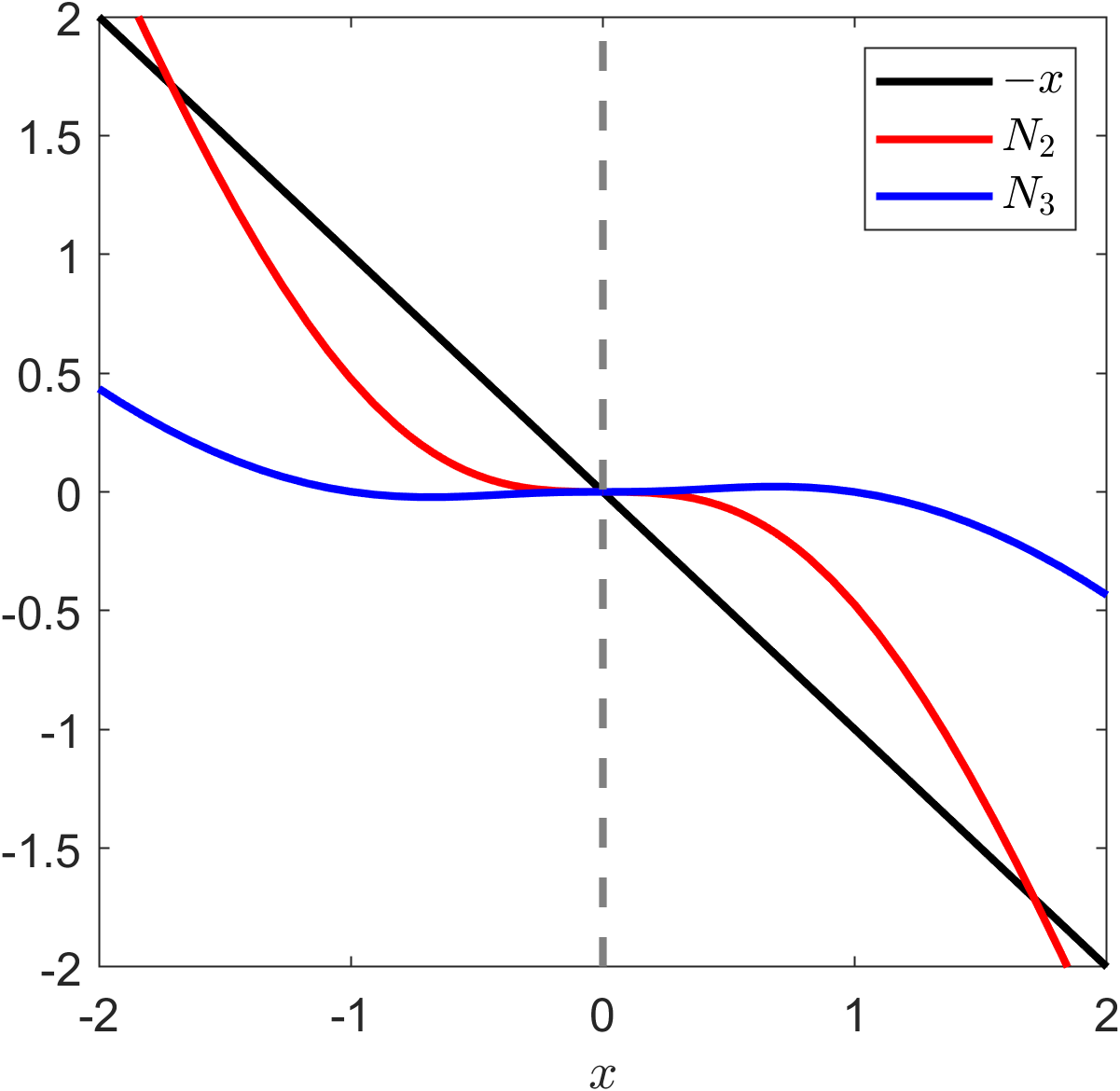}
         \caption{}
     \end{subfigure}
        \caption{Comparison of the classical Newton map $N_2$ and our third-order Newton map $N_3$ applied to the function in \eqref{Eq: arctan function}. Subfigure (a) implies that the third-order method is globally convergent, while the classical method is not. Subfigure (b) zooms in on the behavior of these maps near the origin to show that the basin of attraction for the classical method is approximately $( -1.712, 1.712)$.}
        \label{fig: arctan}
\end{figure}

\begin{figure}[H]
    \centering
    \includegraphics[width=.6\textwidth]{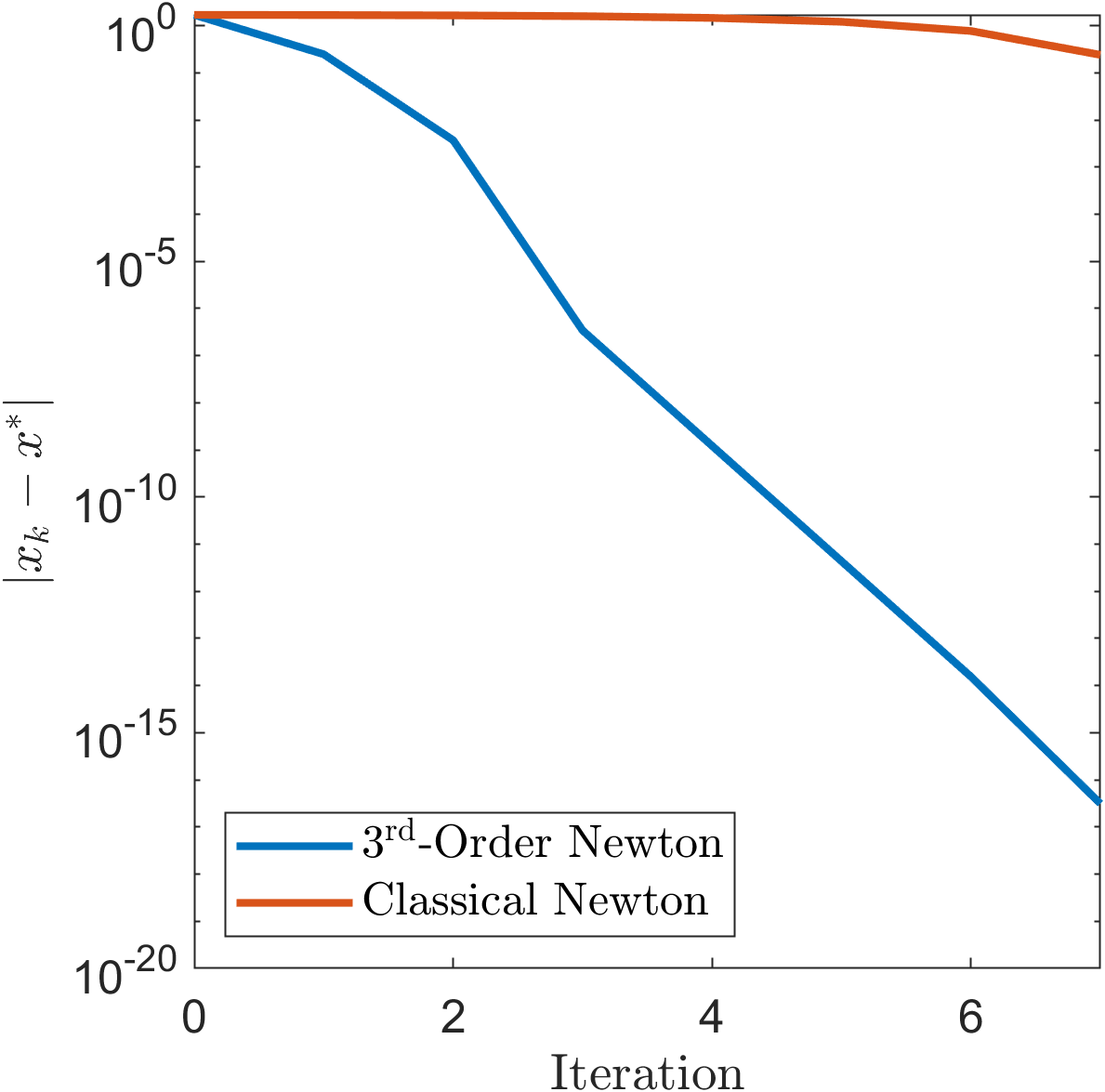}
    \caption{Iterates of our third-order and the classical Newton method applied to the function in \eqref{Eq: arctan function} starting from a point in the basin of attraction of both methods.}
    \label{fig: Newton Arctan Comparison}
\end{figure}

\subsection{A Multivariate Example}
In our last example, we compare the classical and the third-order Newton methods applied to a standard test function in nonlinear optimization called the Beale function:
\[
f(x_1,x_2) = (1.5 - x_1 + x_1x_2)^2 + (2.25 - x_1 + x_1x_2^2)^2 + (2.625 - x_1 + x_1x_2^3)^2.
\]
This nonconvex function has a single global minimum at $x^*=(3,0.5)^T$ and no other local minima.
In \Cref{fig: Beale}, we explore the behavior of both methods with initial conditions in the region $\{x \in \R^2 \mid  \|x\|_\infty \leq 4\}$.
We initialize the classical method and our third-order method at a fine grid of points in this box and run both methods for $350$ iterations.
For our third-order method, we take the parameter $\varepsilon$ in \Cref{alg:main} to be equal to $0.01$.
In \Cref{fig: Beale}, the color yellow corresponds to initial points that converge to $x^*$, and the color blue corresponds to any other behavior including divergence or convergence to a point which is not a local minimum.
In this example, the two basins are incomparable, but that of the third-order method is more contiguous and larger in volume.
\begin{figure}[H]
     \centering
     \begin{subfigure}[b]{0.49\textwidth}
         \centering
         \includegraphics[width=\textwidth]{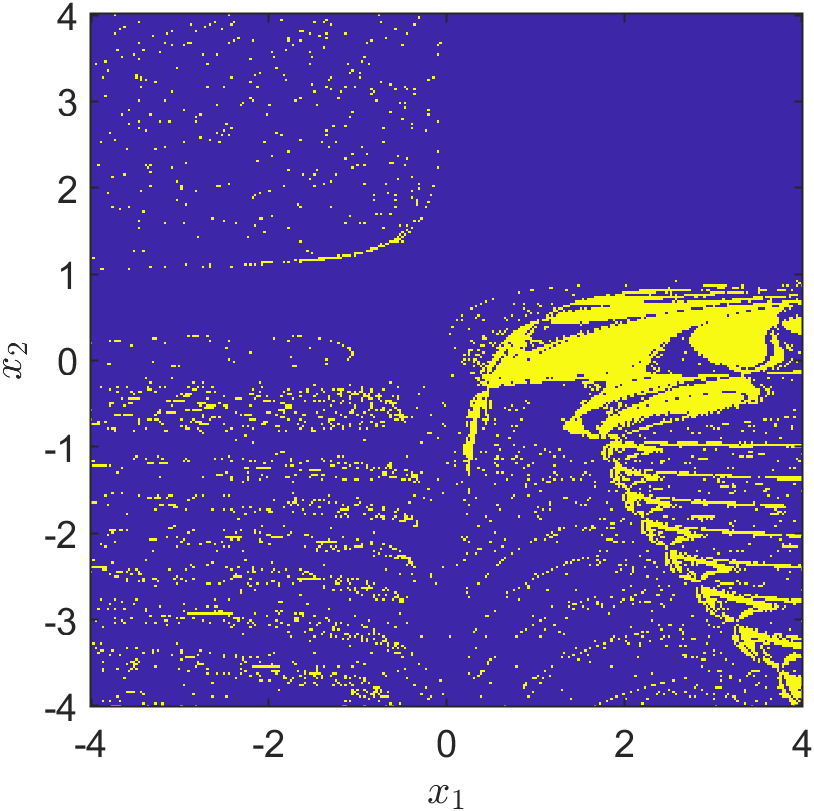}
         \caption{classical Newton}
     \end{subfigure}
     \hfill
     \begin{subfigure}[b]{0.49\textwidth}
         \centering
         \includegraphics[width=\textwidth]{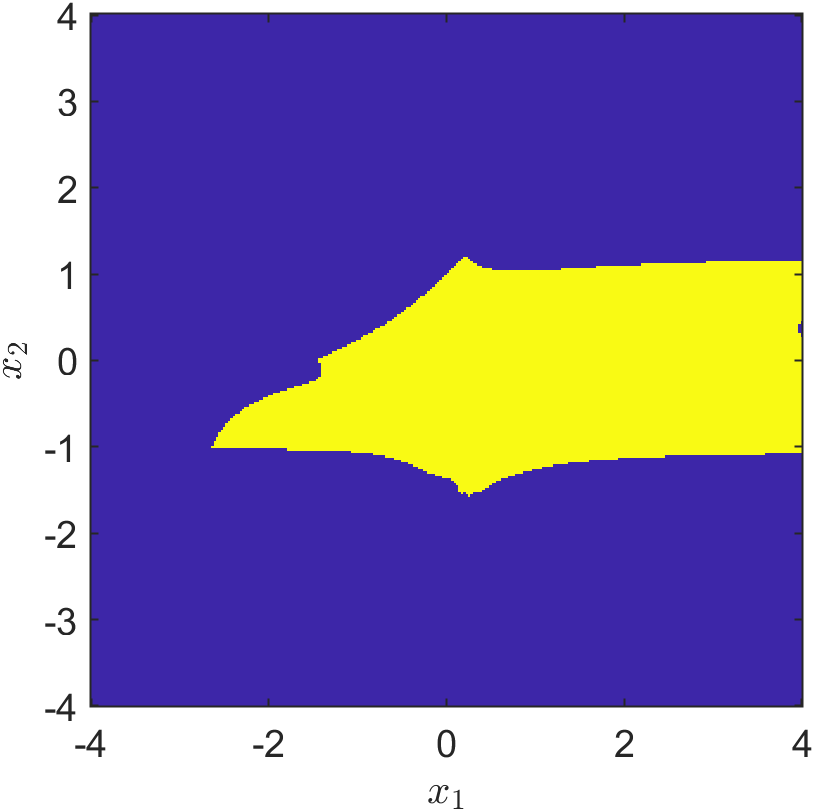}
         \caption{Third-order Newton}
     \end{subfigure}
        \caption{The basins of attraction for the classical and the third-order Newton methods for the minimizer of the Beale function. The basin for the classical method has fractal structure, demonstrating more sensitivity to initialization.}
        \label{fig: Beale}
\end{figure}

\section{Global convergence}\label{Sec: Global convergence}
In this section, we present a slightly modified algorithm which has global convergence under additional assumptions. There is a vast literature on modifications to Newton's method that lead to global convergence in special circumstances: see, e.g.,~\cite{nesterov2006cubic, ortega2000iterative, Mor1982RecentDI, conn2000trust}.
In the setting of our work, it turns out that we can use a result of Nesterov from \citep{nesterov2021implementable} to show that a simple modification to our algorithm that still has polynomial work per iteration is globally convergent when the Taylor expansion is made to an odd order.\footnote{The reason we need the Taylor expansion order to be odd is that in the work of Nesterov, the Taylor polynomial is regularized by a term of degree one larger. We need this new term to be a polynomial function for sum of squares methods to be readily applicable.} This modified algorithm (\Cref{alg: global} below) also inherits the local convergence order of \Cref{alg:main}.

As in \citep{nesterov2021implementable}, suppose the $d$\textsuperscript{th} derivative of the function $f:\Rn \mapsto \R$ that we wish to minimize has a Lipschitz constant $L_d$, and that an upper bound $M$ on $L_d$ is known. In this setting, consider the following algorithm:

\begin{algorithm2e}[htb]
\caption{$d$\textsuperscript{th}-order globally convergent Newton method ($d$ odd)}
\label{alg: global}
\LinesNumbered
\SetKwInOut{Input}{Input}
\DontPrintSemicolon
\Input{$x_0 \in \Rn$}
\For{$k=0,\dots$}{
Solve \eqref{eq:min t} to find $t(x_k)$\;
Let $x_{k+1}$ be the minimizer of $T_{x_k,d}(x) + \max \{ \frac{dM}{(d+1)!},t(x_k) \} \|x-x_k\|^{d+1}$\;
}
\end{algorithm2e}

Using the same arguments as those in the proof of \Cref{thm:well defined}, one can see that the next iterate $x_{k+1}$ produced by this algorithm is well-defined whenever $\nabla^2 f(x_k) \succ 0$. Also as before, problem \eqref{eq:min t} can be solved as a semidefinite program of size polynomial in the dimension. This claim also holds for the problem of finding the (unique) minimizer of the degree $d+1$ polynomial \[T_{x_k,d} + \max \{ \frac{dM}{(d+1)!},t(x_k) \} \|x-x_k\|^{d+1}.\] This is because the polynomials $\|x-x_k\|^{d+1}$ and $T_{x_k,d} +t(x_k)\|x-x_k\|^{d+1}$ are sos-convex and a conic combination of two sos-convex polynomials is sos-convex, making \Cref{thm: lasserre exact} applicable. 

\begin{theorem}\label{thm: global}
    Suppose $f:\Rn\rightarrow\R$ has bounded level sets, a positive definite Hessian everywhere, and the Lipschitz constant of its $d$\textsuperscript{th} derivative bounded above by $M$.\footnote{The assumptions that we make here are the same as those in \citep{nesterov2021implementable} except that our assumption of positive definiteness of the Hessian is stronger than the assumption of positive semidefiniteness of the Hessian made in \citep{nesterov2021implementable}.} Then, the iterates of Algorithm~\ref{alg: global} starting from any $x_0\in~\Rn$ converge to the (unique) minimizer of $f$. Furthermore, Algorithm~\ref{alg: global} has local convergence rate of order $d$.
\end{theorem}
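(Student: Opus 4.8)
The plan is to establish the two claims separately, reusing the machinery already developed for \Cref{alg:main} and invoking the global convergence theory of Nesterov~\cite{nesterov2021implementable} for the genuinely new ingredient. Throughout, write $\tau_k \defeq \max\{\tfrac{dM}{(d+1)!}, t(x_k)\}$ and let $\Phi_k(x) \defeq T_{x_k,d}(x) + \tau_k \|x-x_k\|^{d+1}$ denote the surrogate minimized at iteration $k$ (recall $d'=d+1$ since $d$ is odd). First I would record well-definedness: since $\nabla^2 f \succ 0$ everywhere, \eqref{eq:min t} is feasible at every iterate by \Cref{lem:t bounded}, and because $\tau_k \ge t(x_k)$ while $\|x-x_k\|^{d+1}$ is sos-convex, $\Phi_k$ is a conic combination of sos-convex polynomials, hence itself sos-convex with positive definite Hessian at $x_k$; by \Cref{thm: lasserre exact} and \Cref{lem: unique min}, the minimizer $x_{k+1}$ exists, is unique, and is recoverable from an SDP.

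For global convergence I would first set up the descent structure. Since $M \ge L_d$ and $d \ge 1$, we have $\tau_k \ge \tfrac{L_d}{(d+1)!}$, so the standard Lipschitz--Taylor bound gives $f(x) \le \Phi_k(x)$ for all $x$, with $\Phi_k(x_k)=f(x_k)$ and $\nabla\Phi_k(x_k)=\nabla f(x_k)$. Consequently $f(x_{k+1}) \le \Phi_k(x_{k+1}) = \min_x \Phi_k(x) \le \Phi_k(x_k) = f(x_k)$, so $\{f(x_k)\}$ is non-increasing and every iterate stays in the compact level set $S_0 \defeq \{x : f(x)\le f(x_0)\}$. The crucial new point is to control the regularization coefficient: applying \Cref{lem:t bounded} on the compact set $S_0$ (where $\nabla^2 f \succ 0$) bounds $t(x_k)$, hence $\tau_k$, uniformly by some $\tau_{\max}$. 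Every $\tau_k$ therefore lies in a bounded interval whose left endpoint $\tfrac{dM}{(d+1)!}$ is precisely the coefficient that makes the regularized Taylor model convex and majorant in Nesterov's framework. I would then invoke Nesterov's sufficient-decrease estimate: from $\nabla\Phi_k(x_{k+1})=0$ and the gradient-remainder bound of \Cref{Lem: Gradient remainder} one obtains $\|\nabla f(x_{k+1})\| \le C_1 \|x_{k+1}-x_k\|^{d}$ together with a decrease $f(x_k)-f(x_{k+1}) \ge \kappa \|x_{k+1}-x_k\|^{d+1}$, where $\kappa>0$ is uniform thanks to $\tau_k \le \tau_{\max}$. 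Since $\{f(x_k)\}$ converges, telescoping gives $\sum_k \|x_{k+1}-x_k\|^{d+1} < \infty$, so $\|x_{k+1}-x_k\|\to 0$ and hence $\nabla f(x_k)\to 0$; every accumulation point of the bounded sequence $\{x_k\}$ is then a critical point of the strictly convex $f$, thus equals its unique minimizer $x^*$, whence $x_k \to x^*$.

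For the local order-$d$ rate I would rerun the proof of \Cref{thm:rate} with $t(x_k)\|x-x_k\|^{d+1}$ replaced by $\tau_k\|x-x_k\|^{d+1}$. Every step survives: $\nabla^2\Phi_k \succeq \nabla^2 T_{x_k,d}$ because the added term is convex, so the argument of \Cref{lem:psi at optimum} still gives $\lambda_{\min}\nabla^2\Phi_k(x^*)\ge \tfrac12\lambda_{\min}\nabla^2 f(x^*)$ once $\|x_k-x^*\|$ is small; the fundamental theorem of calculus together with \Cref{lem:matrix quadrature} yields $\|x_{k+1}-x^*\| \le C_2 \|\nabla\Phi_k(x^*)\|$; and expanding $\nabla\Phi_k(x^*) = \nabla T_{x_k,d}(x^*) + (d+1)\tau_k\|x^*-x_k\|^{d-1}(x^*-x_k)$, then bounding the first term via \Cref{Lem: Gradient remainder} and the second using $\tau_k \le \tau_{\max}$ on a compact neighborhood of $x^*$, gives $\|\nabla\Phi_k(x^*)\| \le C_3\|x_k-x^*\|^{d}$. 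Composing these bounds produces $\|x_{k+1}-x^*\| \le c\|x_k-x^*\|^d$.

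The main obstacle is the global claim, namely reconciling our iteration-dependent, deliberately enlarged regularization $\tau_k$ with Nesterov's fixed-coefficient convergence theorem. Two observations make this go through: $\tau_k$ never drops below Nesterov's threshold $\tfrac{dM}{(d+1)!}$, so convexity and majorization are retained, and $\tau_k$ is uniformly bounded above by \Cref{lem:t bounded} on the compact level set, so the per-iteration decrease is bounded below by a single positive constant rather than a vanishing one. A subtler point worth flagging is that the subproblem must be sos-convex, not merely convex, for the SDP to solve it exactly via \Cref{thm: lasserre exact}; this is exactly why we take the maximum with $t(x_k)$ and appeal to the conic-combination closure of sos-convexity, rather than simply using Nesterov's coefficient.
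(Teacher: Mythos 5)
Your proposal is correct and follows the paper's strategy in all the essential respects: well-definedness via \Cref{lem:t bounded}, \Cref{thm: lasserre exact} and \Cref{lem: unique min}; majorization and monotonicity from Nesterov's Theorem 1; the key new ingredient of uniformly bounding the adaptive coefficient $\tau_k$ over the compact level set using \Cref{lem:t bounded}; and the local order-$d$ rate by rerunning the proof of \Cref{thm:rate} with $t(x_k)$ replaced by $\max\{\tfrac{dM}{(d+1)!},t(x_k)\}\le \tau_{\max}$. The one place you genuinely diverge is in how global convergence is extracted from the decrease estimate. The paper imports the recursion $\delta_k-\delta_{k+1}\ge C_k\,\delta_k^{(d+1)/d}$ from the proof of Nesterov's Theorem 2, uses $\tau_k\le\tau_{\max}$ to bound $C_k$ below uniformly, and thereby obtains an explicit global sublinear rate $f(x_k)-f(x^*)=O(k^{-d})$, from which convergence of the iterates follows. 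You instead use a sufficient-decrease inequality in terms of the step length, telescope it to get $\|x_{k+1}-x_k\|\to 0$, deduce $\nabla f(x_k)\to 0$ from the stationarity of $x_{k+1}$ for $\Phi_k$ plus the Taylor gradient-remainder bound, and conclude via an accumulation-point argument and strict convexity. Your route is sound (and arguably more elementary), but it proves only qualitative convergence, whereas the paper's route yields a quantitative global rate for free; also, be aware that the uniform constants in your two inequalities pull in opposite directions --- the constant in $\|\nabla f(x_{k+1})\|\le C_1\|x_{k+1}-x_k\|^d$ needs the upper bound $\tau_k\le\tau_{\max}$, while the sufficient-decrease constant $\kappa$ comes from uniform convexity and thus from the lower bound $\tau_k\ge\tfrac{dM}{(d+1)!}$, not from $\tau_{\max}$ as you state --- so both bounds on $\tau_k$ are in fact used, just not where you attribute them.
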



\begin{proof}
Since the Hessian of $f$ is positive definite everywhere, the function $f$ is strictly convex.
This, along with boundedness of the level sets, implies that $f$ has a unique (global) minimizer which we call $x^*$.

Define $\psi_{x_k,d}(x) \defn T_{x_k,d}(x) + \max \{ \frac{dM}{(d+1)!},t(x_k) \} \|x-x_k\|^{d+1}$.
By Theorem~1 from \citep{nesterov2021implementable}, we have $\psi_{x_k,d}(x) \geq f(x)$ for all $x \in \Rn$, thus the method is monotone; i.e., $f(x_{k+1}) \leq f(x_k)$.
Let $M_k \defn \max \left \{ M,\frac{(d+1)!t(x_k)}{d} \right \}$ and $\delta_k \defn f(x_k)-f(x^*)$.
Since the set $\{ x \in \Rn \mid f(x) \leq f(x_0)\}$ is compact and the method is monotone, there exists a scalar $D$ such that $\|x_k-x^*\| \leq D$ for all $k$.
By the arguments in the proof of Theorem 2 from~\citep{nesterov2021implementable}, we can conclude that
\[
\delta_k - \delta_{k+1} \geq C_k \delta_k^{\frac{d+1}{d}},
\]
where $C_k \defn \frac{d}{d+1} \left( \frac{d!}{(dM_k + L_d)D^{d+1}} \right)^{\frac{1}{d}}$.

By \Cref{lem:t bounded}, we know that \[t_{\max} \defn \sup_{\|x-x^*\|\leq D} t(x)\] is finite.
Letting $M_{\max} \defn \max \{ M,\frac{(d+1)!t_{\max}}{d} \}$, we have $M_k \leq M_{\max}$, and therefore $C_k \geq \frac{d}{d+1} \left( \frac{d!}{(dM_{\max} + L_d)D^{d+1}} \right)^{\frac{1}{d}}$ for all $k$.
Continuing the argument from the proof of Theorem~2 from~\cite{nesterov2021implementable}, we can conclude that
\[
f(x_k) - f(x^*) \leq \frac{(dM_{\max} + L_d)D^{d+1}}{d!} \left( \frac{d+1}{k} \right)^d.
\]
Thus, we have $f(x_k) - f(x^*) \rightarrow 0$ and therefore $x_k \rightarrow x^*$.

For the local superlinear convergence rate, it suffices to show that for $x_k$ close enough to $x^*$, we have
\[
||x_{k+1} - x^*|| \leq c' ||x_k - x^*||^d
\] for some constant $c'$.
Let $r_1$ and $r_2$ be as in the proof of \Cref{thm:rate}, $r' \defn~\min \{ r_1,r_2\},$ and $\Omega \defn \{x \in \Rn \mid ||x-x^*|| \leq r'\}$.
By the arguments in the proof of \Cref{thm:rate}, for every $x_k\in \Omega$, we have
\begin{align*}
||\nabla \psi_{x_k,d}(x^*)|| &\leq \frac{L_d}{d!} ||x^*-x_k||^d + \max \left \{ \frac{dM}{(d+1)!},t(x_k) \right \} (d+1) ||x^*-x_k||^d \\
&\leq \frac{L_d}{d!} ||x^*-x_k||^d + \max \left \{ \frac{dM}{(d+1)!},\sup_{x \in \Omega}t(x) \right \} (d+1) ||x^*-x_k||^d.
\end{align*}
Substituting into \eqref{eq:grad psi} (with $x_0$ replaced with $x_k$), we have
\[
||x_{k+1} - x^*|| \leq  c'  ||x^*-x_k||^d,
\]
where 
\[
c' \defn \frac{4((d-1)^2-1)}{\lambda_{\min}\nabla^2f(x^*)} \left( \frac{L_d}{d!} + \max \left \{ \frac{dM}{(d+1)!},\sup_{x \in \Omega}t(x) \right \} (d+1) \right).
\]
We note that by \Cref{lem:t bounded}, $\sup_{x \in \Omega}t(x)$ is finite.
\end{proof}

\section{Future directions}\label{sec:future}
Besides the question of extending the results of \Cref{Sec: Global convergence} to the case of $d$ even, there are a few other potential directions for future research that we wish to highlight:
\begin{itemize}

\item Can we replace the SDPs used in \Cref{alg:main} with more scalable conic programs such as linear programs (LPs) or second-order cone programs (SOCPs)?
There has been work (see, e.g., \cite{dsos_sdsos}) on replacing methods based on sos programming with LP or SOCP-based approaches that rely on more tractable subsets of sos polynomials, such as the so-called \emph{diagonally dominant sum of squares} (dsos) or \emph{scaled diagonally dominant sum of squares} (sdsos) polynomials.
In our setting, we might wish to replace the constraint in \eqref{eq:min t} (or \eqref{eq:min t bar}) that a polynomial is sos-convex with a constraint that it is ``dsos-convex'' or ``sdsos-convex'' (see, e.g.,~\cite{dc_sos}).
The results in~\cite{dc_sos} on the difference of dsos-convex decompositions of arbitrary polynomials could be explored to potentially replace the first SDP in each iteration of \Cref{alg:main} with an LP or SOCP. One would then need to establish an appropriate dsos or sdsos version of \Cref{thm: lasserre exact} to replace our second SDP with an LP or SOCP. It would be interesting to compare the factor of convergence of such an algorithm to that of the SDP-based approach.



\item Can we create a method that uses a sparse subset of higher-order derivatives of the function $f$ and that perhaps approximates the remaining derivatives in order to speed up each iteration? Such a method would be a higher-order analogue to the so-called ``quasi-Newton'' methods which rely on approximations of the Hessian of $f$ (see, e.g.,~\cite[Chap. 6]{Nocedal_Wright_2006}). An example of such a higher-order quasi-Newton method which results in semidefinite programs of small size in each iteration has been proposed in~\cite{spq}, but its convergence properties are currently unknown.

\item Can we use our method or a modification thereof to solve systems of nonlinear equations (in a way that is superior to simply minimizing the sum of the squares of the equations)?
The classical Newton method and its variants can be used for this purpose (see, e.g.,~\cite[Sect. 11.1]{Nocedal_Wright_2006}). What are the right higher-order analogues of these approaches?

\item Each iteration of the algorithms that we have presented in this paper can be interpreted as running just one iteration of the so-called ``convex-concave procedure'' (see, e.g., \cite{boyd2016ccp}) to a particular difference of convex decomposition of the Taylor expansion of $f$. Are there benefits of working with alternative difference of convex decompositions (see, e.g., \cite{dc_sos}) of the Taylor expansion, or running more iterations of the convex-concave procedure before the Taylor polynomial is updated?
\end{itemize}

\section*{Acknowledgements}
We would like to thank Jean-Bernard Lasserre for insightful discussions around the results in~\cite{lasserre2009}.

\bibliography{newton}

\end{document}